\setlist[1]{itemsep=0em} 	% default enum spacing is awful
\renewcommand{\restriction}{\mathord{\upharpoonright}}
\theoremstyle{plain}
\newtheorem{theorem}{Theorem}[section]
\newtheorem{prop}[theorem]{Proposition}
\newtheorem{observation}[theorem]{Observation}
\newtheorem{lemma}[theorem]{Lemma}
\newtheorem{fact}[theorem]{Fact}
\newtheorem{claim}[theorem]{Claim}
\theoremstyle{definition}
\newtheorem{definition}[theorem]{Definition}
\newtheorem{example}{Example}
\newtheorem*{remark*}{Remark}
\newtheorem*{claim*}{Claim}
\theoremstyle{remark}
\newtheorem{remark}[theorem]{Remark}
\newcommand{\overbar}[1]{\mkern 1.5mu\overline{\mkern-1.5mu#1\mkern-1.5mu}\mkern 1.5mu}
\def\str#1{\mathbf {#1}}
\def\Fraisse{Fra\"{\i}ss\' e}
\def\dom{\mathop{\mathrm{Dom}}\nolimits}
\def\Aut{\mathop{\mathrm{Aut}}\nolimits}
\def\cl{\mathop{\mathrm{Cl}}\nolimits}
\def\str#1{\mathbf {#1}}
\def\arity#1{a(\rel{}{#1})}
\def\nbrel#1#2{R_{#1}^{#2}}
\def\rel#1#2{R_{\mathbf{#1}}^{#2}}
\def\permrel#1#2#3{#1(R^{#3})_{\mathbf{#2}}}
\def\func#1#2{F_{\mathbf{#1}}^{#2}}
\def\permfunc#1#2#3{#1(F^{#3})_{\mathbf{#2}}}
\def\GammaL{\Gamma\!_L}
\def\Aclass{\mathcal A^\delta_{K}}
\begin{document}
\bibliographystyle{plain}

\begin{frontmatter}
\title{Extending partial isometries of antipodal graphs}

\author{Mat\v ej Kone\v cn\'y}
\ead{matej@kam.mff.cuni.cz}
\address{Department of Applied Mathematics\\ Faculty of Mathematics and Physics, Charles University\\Malostransk\'e n\'am. 25, Prague, 118 00, Czech Republic}

\begin{abstract}
We prove EPPA (extension property for partial automorphisms) for all antipodal classes from Cherlin's list of metrically homogeneous graphs, thereby answering a question of Aranda et al. This paper should be seen as the first application of a new general method for proving EPPA which can bypass the lack of automorphism-preserving completions. It is done by combining the recent strengthening of the Herwig--Lascar theorem by Hubi\v cka, Ne\v set\v ril and the author with the ideas of the proof of EPPA for two-graphs by Evans et al.
\end{abstract}

\begin{keyword}
EPPA \sep Hrushovski property \sep metrically homogeneous graph \sep antipodal space
\end{keyword}

\end{frontmatter}

\section{Introduction}
Let $G=(V,E)$ be a (not necessarily finite) graph and let $X,Y$ be subsets of $V$. We say that a function $f\colon X\to Y$ is a \emph{partial automorphism} of $G$ if $f$ is an isomorphism of $G[X]$ and $G[Y]$, the graphs induced by $G$ on $X$ and $Y$ respectively. This notion naturally extends to arbitrary structures (see Section~\ref{sec:preliminaries}).

In 1992 Hrushovski~\cite{hrushovski1992} proved that for every finite graph $G$ there is a finite graph $H$ such that $G$ is an induced subgraph of $H$ and every partial automorphism of $G$ extends to an automorphism of $H$.  This property is, in general, called the extension property for partial automorphisms (EPPA):

\begin{definition}
Let $\mathcal C$ be a class of finite structures. We say that $\mathcal C$ has the \emph{extension property for partial automorphisms} (or \emph{EPPA}), also called the \emph{Hrushovski property}, if for every $\str A\in \mathcal C$ there is $\str B\in \mathcal C$ such that $\str A$ is an (induced) substructure of $\str B$ and for every isomorphism $f$ of substructures of $\str A$ there is an automorphism $g$ of $\str B$ such that $f\subseteq g$. We call such $\str B$ an EPPA-witness for $\str A$.
\end{definition}

Hrushovski's proof was group-theoretical, Herwig and Lascar~\cite{herwig2000} later gave a simple combinatorial proof by embedding $G$ into the complement of a Kneser graph. After this, the quest of identifying new classes of structures with EPPA continued with a series of papers including~\cite{Aranda2017,Conant2015,eppatwographs,Herwig1995,herwig1998,herwig2000,hodkinson2003,Hubicka2018metricEPPA,Hubicka2017sauer,Hubicka2018EPPA,Konecny2018b,otto2017,solecki2005,vershik2008}.

Let $G=(V,E)$ be a graph. We say that a (partial) map $f\colon V\to V$ is \emph{distance-preserving} if whenever $u,v$ are in the domain of $f$, the distance between $u$ and $v$ is the same as the distance between $f(u)$ and $f(v)$. Clearly, every automorphism is distance-preserving. In 2005, Solecki~\cite{solecki2005} (and independently also Vershik~\cite{vershik2008}) proved that the class of all finite graphs has a variant of EPPA for distance-preserving maps. Namely, they proved that for every finite graph $G$ there is a finite graph $H$ satisfying the following:
\begin{enumerate}
\item $G$ is an induced subgraph of $H$,
\item whenever $u,v$ are vertices of $G$, then the distance between $u$ and $v$ in $G$ is the same as in $H$, and
\item every partial distance-preserving map of $G$ extends to an automorphism of $H$.
\end{enumerate}

It is not very convenient to work with distance-preserving maps, because they are relative to a graph and thus a distance-preserving map on a subgraph need not be distance-preserving with respect to a supergraph and vice versa. Given a graph $G=(V,E)$, it is more natural to consider the metric space $M=(V,d)$ where $d(u,v)$ is the number of edges of the shortest path from $u$ to $v$ in $G$ (we will call this the \emph{path-metric space} of $G$). And this is in fact what Solecki and Vershik did --- they proved EPPA for all (integer-valued) metric spaces, which is equivalent to EPPA for graphs with distance-preserving maps.

Vershik's proof is unpublished, Solecki's proof uses a complicated general theorem of Herwig and Lascar~\cite[Theorem~3.2]{herwig2000} about EPPA for structures with forbidden homomorphisms. Hubi\v cka, Ne\v set\v ril and the author~\cite{Hubicka2018metricEPPA} recently gave a simple self-contained proof of Solecki's result. There is also a group theoretical proof by Sabok~\cite{sabok2017automatic} using a construction \`a la Mackey~\cite{mackey1966}.

This paper continues in this direction. Generalising the concept of distance transitivity, we say that a (countable) connected graph $G$ is \emph{metrically homogeneous} if every partial distance-preserving map of $G$ with finite domain extends to an automorphism of $G$ (so it is, in a sense, an EPPA-witness for itself). Cherlin~\cite{Cherlin2011b} gave a list of countable metrically homogeneous graphs (which is conjectured to be complete and is provably complete in some cases~\cite{Amato2016, Cherlin2013}) in terms of classes of finite metric spaces which embed into the path-metric space of the given metrically homogeneous graph. EPPA and other combinatorial properties of classes from Cherlin's list were studied by Aranda, Bradley-Williams, Hng, Hubi{\v c}ka, Karamanlis, Kompatscher, Pawliuk and the author~\cite{Aranda2017a, Aranda2017c, Aranda2017} (see also~\cite{Konecny2018bc}) and in~\cite{Aranda2017} almost all the questions were settled, only EPPA for antipodal classes of odd diameter and bipartite antipodal classes of even diameter (see Section~\ref{sec:methom}) remained open. An important step was later done by Evans, Hubi\v cka, Ne\v set\v ril and the author~\cite{eppatwographs} who proved EPPA for antipodal metric spaces of diameter 3.

In this paper we combine the results of~\cite{Aranda2017} with the ideas from~\cite{eppatwographs} and the new strengthening of the Herwig--Lascar theorem by Hubi\v cka, Ne\v set\v ril and the author~\cite{Hubicka2018EPPA} (stated here in a weaker form as Theorem~\ref{thm:hl}) and prove the following theorem, thereby answering a question of Aranda et al. (Problem~1.3 in~\cite{Aranda2017}) and completing the study of EPPA for classes from Cherlin's list.
\begin{theorem}
\label{thm:main}
Every class of antipodal metric spaces from Cherlin's list has EPPA.
\end{theorem}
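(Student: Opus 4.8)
The plan is to prove EPPA for antipodal classes by reducing to the strengthened Herwig--Lascar theorem (Theorem~\ref{thm:hl}), following the strategy that was used for two-graphs in~\cite{eppatwographs}. The fundamental obstacle, as the abstract foreshadows, is the \emph{lack of automorphism-preserving completions}: in an antipodal space of diameter $\delta$, each vertex has a unique \emph{antipode} at distance $\delta$, and this pairing interacts rigidly with the metric (for odd $\delta$, distances to a vertex and to its antipode must sum to $\delta$). When one tries to complete a partial metric space to a full one, the naive greedy completions do not respect the antipodal structure, and worse, they cannot be made automorphism-invariant --- which is exactly the feature that standard EPPA arguments via Ramsey/completion rely on.

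\medskip
\noindent\textbf{Step 1: Encode the antipodal structure via an auxiliary relational language.}
First I would replace the metric space $\str A$ by a structure in a richer language that records, for each pair of vertices, not only their distance but also a binary relation capturing the antipodal ``sign''. Concretely, I would add a binary relation (or a small family of them) encoding the antipodal map $v \mapsto v'$, so that the rigid constraint ``$d(x,y) + d(x',y) = \delta$'' becomes expressible, and I would fix a finite set of \emph{forbidden homomorphic images} $\mathcal F$ describing exactly which small configurations of distances and antipodal relations are inconsistent (e.g.\ violations of the triangle inequality combined with the antipodal law). This turns ``being a valid antipodal metric space from Cherlin's list'' into ``being $\mathcal F$-free'' in the sense required by the Herwig--Lascar framework.

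\medskip
\noindent\textbf{Step 2: Apply the strengthened Herwig--Lascar theorem.}
Given $\str A$ encoded as above, Theorem~\ref{thm:hl} produces a finite EPPA-witness $\str B$ in the auxiliary language that is still $\mathcal F$-free and into which $\str A$ embeds, with every partial automorphism of $\str A$ extending to an automorphism of $\str B$. The key reason this is available here --- and the point of the new strengthening over the classical Herwig--Lascar theorem --- is that it does not demand an automorphism-preserving completion procedure; it only needs the forbidden configurations to be closed under the relevant operations and the class to be described by finitely many forbidden homomorphic images. I would verify these hypotheses for the concrete $\mathcal F$ from Step~1.

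\medskip
\noindent\textbf{Step 3: Complete $\str B$ back to a genuine antipodal metric space and check it stays in Cherlin's class.}
The output $\str B$ is only $\mathcal F$-free, not yet a metric space: distances may be missing or only constrained. The final and most delicate step is to \emph{complete} $\str B$ to an actual antipodal metric space $\str B'$ on the same vertex set, assigning all missing distances consistently with the triangle inequality, the diameter bound, and the antipodal law, in such a way that (i) $\str A$ remains isometrically embedded, and (ii) $\str B'$ lies in the prescribed class from Cherlin's list. Here I would split into cases by parity of the diameter $\delta$ --- odd $\delta$ and the bipartite even-$\delta$ case are precisely the two families left open by~\cite{Aranda2017} --- and use the combinatorial description of admissible distance tuples from Cherlin's list to show a consistent completion always exists. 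Because the automorphisms of $\str B$ were already arranged in the richer language to preserve the antipodal relation, they will automatically be isometries of $\str B'$, so the completion need \emph{not} itself be automorphism-invariant; this is exactly what lets us sidestep the missing automorphism-preserving completion. I expect this completion step, and in particular verifying membership in Cherlin's class across both parity regimes, to be the main technical obstacle and the heart of the proof.
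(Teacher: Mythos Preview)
Your overall architecture---expand the language, invoke Theorem~\ref{thm:hl}, then take a reduct---matches the paper's, but the mechanism you describe has a genuine gap that would make the argument fail.

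The central misreading is in Step~2: Theorem~\ref{thm:hl} does \emph{not} dispense with automorphism-preserving completions. On the contrary, its hypothesis is precisely that $\mathcal K$ be a \emph{locally finite automorphism-preserving subclass} of $\mathcal E$ (Definition~\ref{defn:locallyfinite}). The strengthening over classical Herwig--Lascar is something else entirely: it allows the language to carry a nontrivial permutation group $\GammaL$ and allows unary functions. So after Step~2 you do not get an $\mathcal F$-free structure awaiting completion; you get an EPPA-witness already lying in $\mathcal K$, \emph{provided} you have first arranged an automorphism-preserving completion for $\mathcal K$. Your Step~3, as written, is therefore both in the wrong place and logically circular: you say the completion ``need not itself be automorphism-invariant'' because automorphisms of $\str B$ already preserve the antipodal relation, but preserving the antipodal pairing does not force an automorphism of $\str B$ to preserve the \emph{new} distances you fill in. The paper's Section~\ref{sec:motivation} gives exactly the counterexample: two disjoint $\delta$-edges have an automorphism swapping them, yet no completion to an antipodal quadruple respects it.

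What is missing from your proposal is the device that actually makes the plan work: one expands not by a single antipodal relation but by a large family of \emph{unary} marks $U_i^\chi$ indexed by valuation functions $\chi\colon D\to\{0,1\}$ (one coordinate per $\delta$-edge of $\str A$), together with a permutation group $\GammaL$ acting on these marks by flips and permutations of $D$. The valuations encode, for every pair of vertices, the parity (or more generally the ``$O$ vs.\ $\delta-O$'' class) that a missing distance must receive, and Observation~\ref{obs:valuations} shows this parity datum is automorphism-invariant. With this extra information Fact~\ref{fact:completion} (resp.\ Fact~\ref{fact:bipcompletion}) yields a completion that \emph{is} automorphism-preserving, so Theorem~\ref{thm:hl} applies. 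The second half of the trick is that, although a partial isometry $\varphi$ of $\str A$ need not respect the unary marks on the nose, one can always find $\alpha_\psi^F\in\GammaL$ so that $(\alpha_\psi^F,\varphi)$ is a partial automorphism of the expanded $\str A^+$; this is where the language permutation group is essential and is the step your outline has no analogue of.
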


\section{Preliminaries}\label{sec:preliminaries}
A (not necessarily finite) structure $\str A$ is \emph{homogeneous} if every partial automorphism of $\str A$ with finite domain extends to a full automorphism of $\str A$ itself (so it is, in a sense, an EPPA-witness for itself). Gardiner proved~\cite{Gardiner1976} that the finite homogeneous graphs are precisely disjoint unions of cliques of the same size, their complements, the 5-cycle and the line graph of$K_{3,3}$. Lachlan and Woodrow later~\cite{Lachlan1980} classified the countably infinite homogeneous graphs. These are disjoint unions of cliques of the same size (possibly infinite), their complements, the Rado graph, the $K_n$-free variants of the Rado graph and their complements.

Every homogeneous structure can be associated with the class of all (isomorphism types of) its finite substructures, which is called its \emph{age}. By the \Fraisse{} theorem~\cite{Fraisse1953}, one can reconstruct the homogeneous structure back from this class (because it has the so-called \emph{amalgamation property}). For more on homogeneous structures see the survey by Macpherson~\cite{Macpherson2011}.

\medskip

A graph $G$ is \emph{vertex transitive} if for every pair of vertices $u,v$ there is an automorphism sending $u$ to $v$, it is \emph{edge transitive} if every edge can be sent to every other edge by an automorphism and it is \emph{distance transitive} if for every two pairs of vertices $u,v$ and $x,y$ such that the distance between $u$ and $v$ is the same as the distance between $x$ and $y$ there is an automorphism sending $u$ to $x$ and $v$ to $y$.

Distance transitivity is a very strong condition. For example, there are only finitely many finite 3-regular distance transitive graphs~\cite{Biggs1971} and the full catalogue is available in some other particular cases. However, for larger degrees, the classification is unknown, see e.g. the book by Godsil and Royle~\cite{Godsil2001}, largely devoted to the study of distance transitive graphs.

Recall that a connected graph $G$ is metrically homogeneous if every partial distance-preserving map of $G$ with finite domain extends to an automorphism of $G$. This is equivalent to saying that the path-metric space of $G$ is homogeneous in the sense of the previous paragraphs. All connected homogeneous graphs are also metrically homogeneous, because every pair of vertices is either connected by an edge or by a path of length 2. Finite cycles of size at least 6 are examples of metrically homogeneous graphs which are not homogeneous.

%A \emph{diameter} of a connected graph is the maximum distance between two of its vertices. If this maximum does not exist, we say that the diameter is infinite. A graph $G=(V,E)$ of diameter $\delta$ is \emph{antipodal} if for every $u\in V$ there is precisely one vertex $v\in V$ in distance $\delta$ from $u$ and furthermore every other vertex $w\in V$ lies on a path from $u$ to $v$ of length $\delta$ (that is, $d(u,w)+d(v,w)=\delta$). The class of all antipodal graphs of diameter $3$ is closely connected to switching classes of graphs and two-graphs~\cite{eppatwographs}.

%The standard notion of induced subgraphs does not behave well with metrically homogeneous graphs, because the distances inside a subgraph are typically different from the distances in the original graph. Thus, instead of graphs, we will work with classes of the (isomorphism types of the) special metric spaces which are subspaces of the path-metric of a given countable metrically homogeneous graph.

\begin{remark}
If one checks the known classes with EPPA, they will find out that they all are ages of homogeneous structures. This is not a coincidence. It is easy to see that if a class of finite structures $\mathcal C$ has EPPA and the \emph{joint embedding property} (for every $\str A, \str B\in \mathcal C$ there is $\str C\in \mathcal C$ which contains a copy of both of them), then $\mathcal C$ is the age of a homogeneous structure provided that it contains at most countably many members up to isomorphism. This restricts the candidate classes for EPPA severely and connects finite combinatorics with the study of infinite homogeneous structures and infinite permutation groups.

In the other direction, EPPA has some implications for the automorphism group (with the \emph{pointwise convergence topology}) of the corresponding homogeneous structure, see for example the paper of Hodges, Hodkinson, Lascar, and Shelah~\cite{hodges1993b}.
\end{remark}

\subsection{$\GammaL$-structures}
An important feature of the strengthening of the Herwig--Lascar theorem by Hubi\v cka, Ne\v set\v ril and the author (Theorem~\ref{thm:hl}) is that it allows to also permute the language. Namely, we will work with categories whose objects are the standard model-theoretic structures (in a given language), but the arrows are potentially richer, allowing a permutation of the language. The reader is invited to verify that in the following paragraphs, if the group $\GammaL$ consists of the identity, one obtains the usual notion of model-theoretic $L$-structures with the corresponding maps.

The following notions are taken from~\cite{Hubicka2018EPPA}, sometimes stated in a more special form which is sufficient for our purposes. Many of them were introduced by Hubi\v cka and Ne\v set\v ril~\cite{Hubicka2016} in the context of structural Ramsey theory (e.g. homomorphism-embeddings or completions).

Let $L=L_\mathcal R\cup L_\mathcal F$ be a language with relational symbols $\rel{}{}\in L_\mathcal R$, each having associated {\em arities} denoted by $\arity{}$ and function symbols $F\in L_\mathcal F$. All functions in this paper are unary and have unary range. Let $\GammaL$ be a permutation group on $L$ such that each $\alpha\in\GammaL$ preserves the partition $L=L_\mathcal R\cup L_\mathcal F$ (that is, maps relations to relations and functions to functions) and the arities of all symbols. We will say that $\GammaL$ is a \emph{language equipped with a permutation group}. 

A \emph{$\GammaL$-structure} $\str{A}$ is a structure with {\em vertex set} $A$, functions $\func{A}{}\colon A\to A$ for every $\func{}{}\in L_\mathcal F$ and relations $\rel{A}{}\subseteq A^{\arity{}}$ for every $\rel{}{}\in L_\mathcal R$. We will write structures in bold and their corresponding vertex sets in normal font. If $\GammaL$ is trivial, we will often talk about $L$-structures instead of $\GammaL$-structures.

If the set $A$ is finite we call $\str A$ a \emph{finite structure}. If the language $L$ contains no function symbols, we call $L$ a {\em relational language} and say that a $\GammaL$-structure is a {\em relational $\GammaL$-structure}.

A \emph{homomorphism} $f\colon \str{A}\to \str{B}$ is a pair $f=(f_L,f_A)$ where $f_L\in \GammaL$ and $f_A$ is a mapping $A\to B$
 such that  for every $\rel{}{}\in L_\mathcal R$ and $\func{}{}\in L_\mathcal F$ we have:
\begin{enumerate}
\item[(a)] $(x_1,x_2,\ldots, x_{\arity{}})\in \rel{A}{}\implies (f_A(x_1),f_A(x_2),\ldots,f_A(x_{\arity{}}))\in \permrel{f_L}{B}{}$, and
\item[(b)] $f_A(\func{A}{}(x))=\permfunc{f_L}{B}{}(f_A(x))$.
\end{enumerate}
For brevity, we will also write $f(x)$ for $f_A(x)$ in the context where $x\in A$ and $f(S)$ for $f_L(S)$ where $S\in L$.
For a subset $A'\subseteq A$ we denote by $f(A')$ the set $\{f(x): x\in A'\}$ and by $f(\str{A})$ the homomorphic image of a structure $\str{A}$. Note that we write $f\colon\str A\to \str B$ to emphasize that $f$ respects the structure.

If $f_A$ is injective then $f$ is called a \emph{monomorphism}. A monomorphism $f$ is an \emph{embedding} if for every $\rel{}{}\in L_\mathcal R$ we have the equivalence in the definition, that is, 
$$(x_1,x_2,\ldots, x_{\arity{}})\in \rel{A}{}\iff (f(x_1),f(x_2),\ldots,f(x_{\arity{}}))\in \permrel{f}{B}{}.$$ If the inclusion $A\subseteq B$ together with the identity of $\GammaL$ form an embedding, we say that $\str{A}$ is a \emph{substructure} of $\str{B}$ and often denote it as $\str A\subseteq \str B$. For an embedding $f\colon\str{A}\to \str{B}$ we say that $f(\str{A})$ is a \emph{copy} of $\str A$ in $\str B$. If $f$ is an embedding where $f_A$ is onto, then $f$ is an \emph{isomorphism} and an isomorphism $\str A\to\str A$ is called an \emph{automorphism}.

Note that from the previous paragraph it follows that when $L$ contains functions, not every subset of vertices induces a substructure. Namely, every substructure needs to be closed on functions. For example, if $L$ consists of one unary function $F$, $\GammaL$ contains only the identity and $\str B$ is a $\GammaL$-structure with vertex set $B=\{b_1,b_2\}$ such that $F(b_1)=b_2$ and $F(b_2)$ is not defined, then there is a substructure of $\str B$ on the set $\{b_2\}$, but the smallest substructure of $\str B$ containing $b_1$ is $\str B$ itself. Generalising this example, we say that for a $\GammaL$-structure $\str{B}$ and a set $A$ which is a subset of $B$, the {\em closure of $A$ in $\str{B}$}, denoted by $\cl_\str{B}(A)$, is the smallest substructure of $\str{B}$ containing $A$. For $x\in B$, we will also write $\cl_\str{B}(x)$ for $\cl_\str{B}(\{x\})$.

Generalising the notion of a graph clique, we say that a $\GammaL$-structure $\str A$ is
\emph{irreducible} if for every pair of distinct vertices $x,y\in A$ there is a relation $\rel{}{}\in L$ and a tuple $\bar{r}\in A^{\arity{}}$ containing both $x$ and $y$ such that $\bar{r} \in \rel{A}{}$. Note that the definition of irreducibility from~\cite{Hubicka2018EPPA} is more general than this one (making more structures irreducible in general languages with functions), but stating it would need some more preliminary definitions and moreover they are equivalent for structures which we will consider in this paper.

\begin{example}
If the language only contains unary relations, irreflexive symmetric binary relations and unary functions (which will always be true in this paper), a structure is irreducible if and only if the union of the binary relations is a complete graph.
\end{example}

A homomorphism $f\colon\str{A}\to \str{B}$ is
a \emph{homomorphism-embedding} if the restriction $f\restriction_{\str C}$ is an embedding whenever $\str C$ is an irreducible
substructure of $\str{A}$.
%Given family $\mathcal F$ of $\GammaL$-structures we denote by
%$\Forb(\mathcal F)$ the class of all finite $\GammaL$-structures $\str{A}$
%such that there is no $\str{F}\in \mathcal F$ with a homomorphism-embedding $\str{F}\to\str{A}$.

\subsection{EPPA for $\GammaL$-structures}
We next state the main result of~\cite{Hubicka2018EPPA} for which we need the following definitions, which are mostly variants of the definitions needed for the Hubi\v cka--Ne\v set\v ril theorem~\cite{Hubicka2016}.

A {\em partial automorphism} of a $\GammaL$-structure $\str{A}$ is an isomorphism $f\colon
\str{C} \to \str{C}'$ where $\str{C}$ and $\str{C}'$ are substructures of
$\str{A}$ (remember that it also includes a permutation of the language which is not partial).  We say that a class $\mathcal C$ of finite $\GammaL$-structures  has the {\em
extension property for partial automorphisms}  ({\em EPPA}) if for every $\str{A} \in \mathcal C$
there is $\str{B} \in \mathcal C$ such that $\str{A}$ is a substructure of $\str{B}$
and every partial automorphism of $\str{A}$ extends to an automorphism of
$\str{B}$.  We call $\str{B}$ with such a property an {\em EPPA-witness for
$\str{A}$}.
If $\str{B}$ is an EPPA-witness for $\str A$, we say that it is \emph{irreducible-structure faithful} if for every irreducible substructure $\str{C}$ of $\str{B}$ there exists an 
automorphism $g$ of $\str{B}$ such that $g(C)\subseteq A$. We say that a class $\mathcal C$ of finite $\GammaL$-structures \emph{has EPPA} if there is an EPPA-witness $\str B\in\mathcal C$  for every $\str A\in \mathcal C$. We say that $\mathcal C$ \emph{has irreducible-structure faithful EPPA} if the witness can always be chosen to be irreducible-structure faithful.

\begin{example}\leavevmode
\begin{enumerate}
\item Let $\str A$ be the graph on vertices $u,v,w$ containing a single edge $uv$ (here, the language consists of one binary relation and the permutation group is trivial). Then a possible (irreducible-structure faithful) EPPA-witness for $\str A$ is the graph $\str B$ on vertices $u,v,w,x$ with edges $uv$ and $wx$.
\item To see an example of a non-trivial permutation group, let $L$ be the language consisting of unary relations $\rel{}{i}$, where $1\leq i\leq 10$ and let $\GammaL$ consist of all permutation of $L$ which fix $\rel{}{10}$. Let $\str A$ be the $\GammaL$ structure on one vertex $v$ such that $\rel{A}{1} = \{v\}$ and $\rel{A}{i}=\emptyset$ for every $i\geq 2$. Then every EPPA-witness $\str B$ for $\str A$ must contain vertices $v_2,\ldots,v_9$ such that $v_i\in\rel{B}{i}$ for every $2\leq i\leq 9$, because $\str B$ needs to extend all partial automorphism $f^i$, $2\leq i\leq 9$, such that $f^i_A$ is the empty function and $f^i_L\in\GammaL$ sends $\rel{}{1}$ to $\rel{}{i}$.
\end{enumerate}
\end{example}

\begin{definition}
\label{defn:completion}
Let $\str{C}$ be a $\GammaL$-structure. A $\GammaL$-structure $\str{C}'$ is a \emph{completion}
of $\str{C}$ if there exists an injective homomorphism-embedding $f\colon \str{C}\to\str{C}'$ which fixes every symbol of the language. We say that $\str C'$ is an \emph{automorphism-preserving completion} of $\str C$, if $C\subseteq C'$, the inclusion together with the identity from $\GammaL$ give a homomorphism-embedding, for every $\alpha\in \Aut(\str C)$ there is $\beta\in \Aut(\str C')$ such that $\alpha\subseteq \beta$ and moreover the map $\alpha\mapsto\beta$ is a group homomorphism $\Aut(\str{C})\to\Aut(\str{C}')$.
\end{definition}
In this paper, the languages will contain only unary and binary relations and unary functions, and moreover, whenever $\str C'$ will be a completion of $\str C$, it will always hold that $C' = C$ and that the identity is a homomorphism-embedding. In such a case, for every relation $\rel{}{}\in L$ we have $\rel{C}{} \subseteq \nbrel{\str C'}{}$ with equality for unary $\rel{}{}$. For binary $\rel{}{}$ it holds that if $(u,v)\in \nbrel{\str C'}{}\setminus\rel{C}{}$, then for every binary $\rel{}{0}\in L$ we have $(u,v)\notin \rel{C}{0}$. Furthermore, $\str C'$ is an automorphism-preserving completion of $\str C$ if and only if $\Aut(\str C')=\Aut(\str C)$.

\begin{figure}[t]
\begin{subfigure}{.3\textwidth}
  \centering
  \includegraphics[width=.8\linewidth]{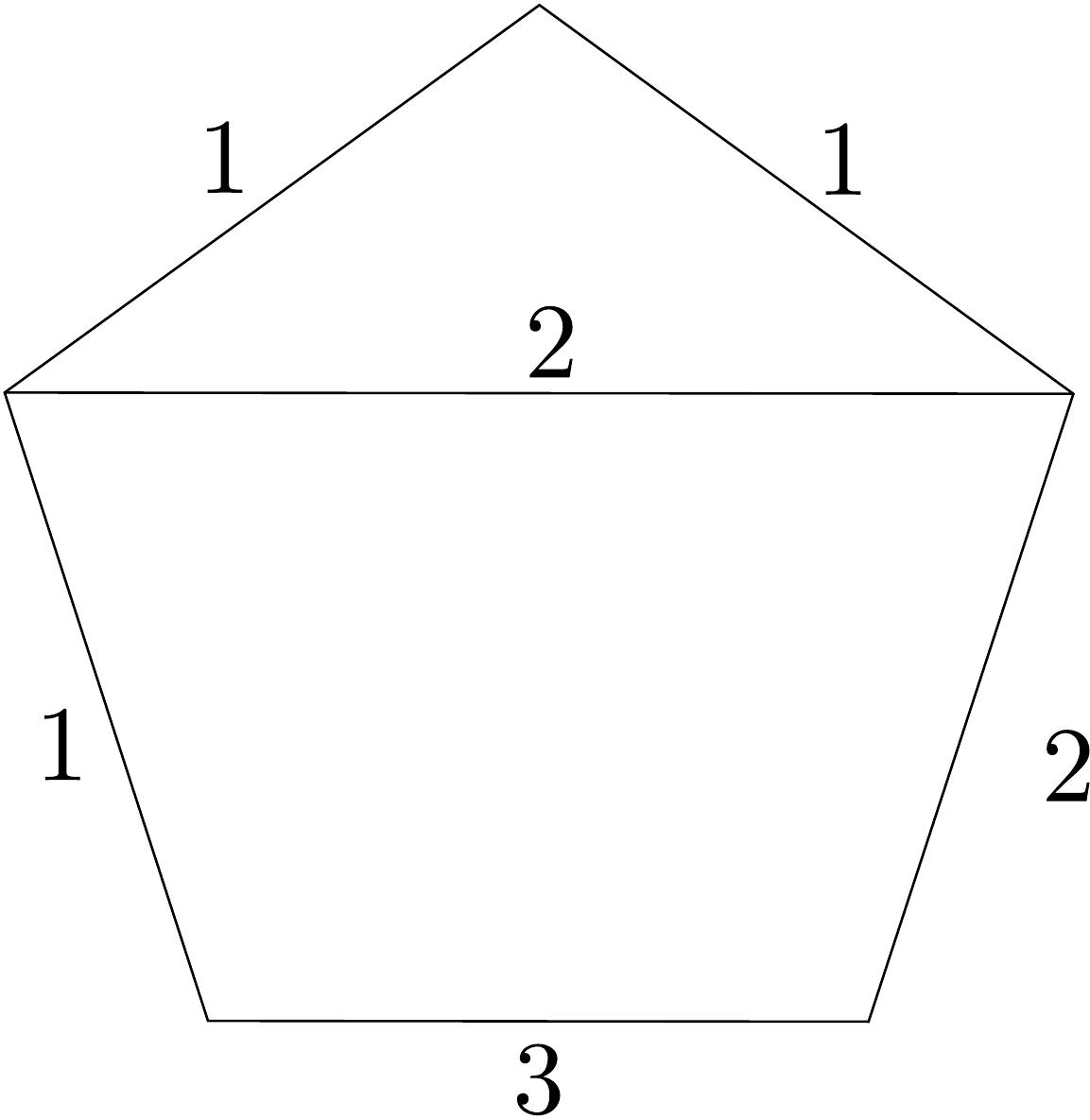}  
  \caption{}
  \label{fig:completion:1}
\end{subfigure}
\begin{subfigure}{.3\textwidth}
  \centering
  \includegraphics[width=.8\linewidth]{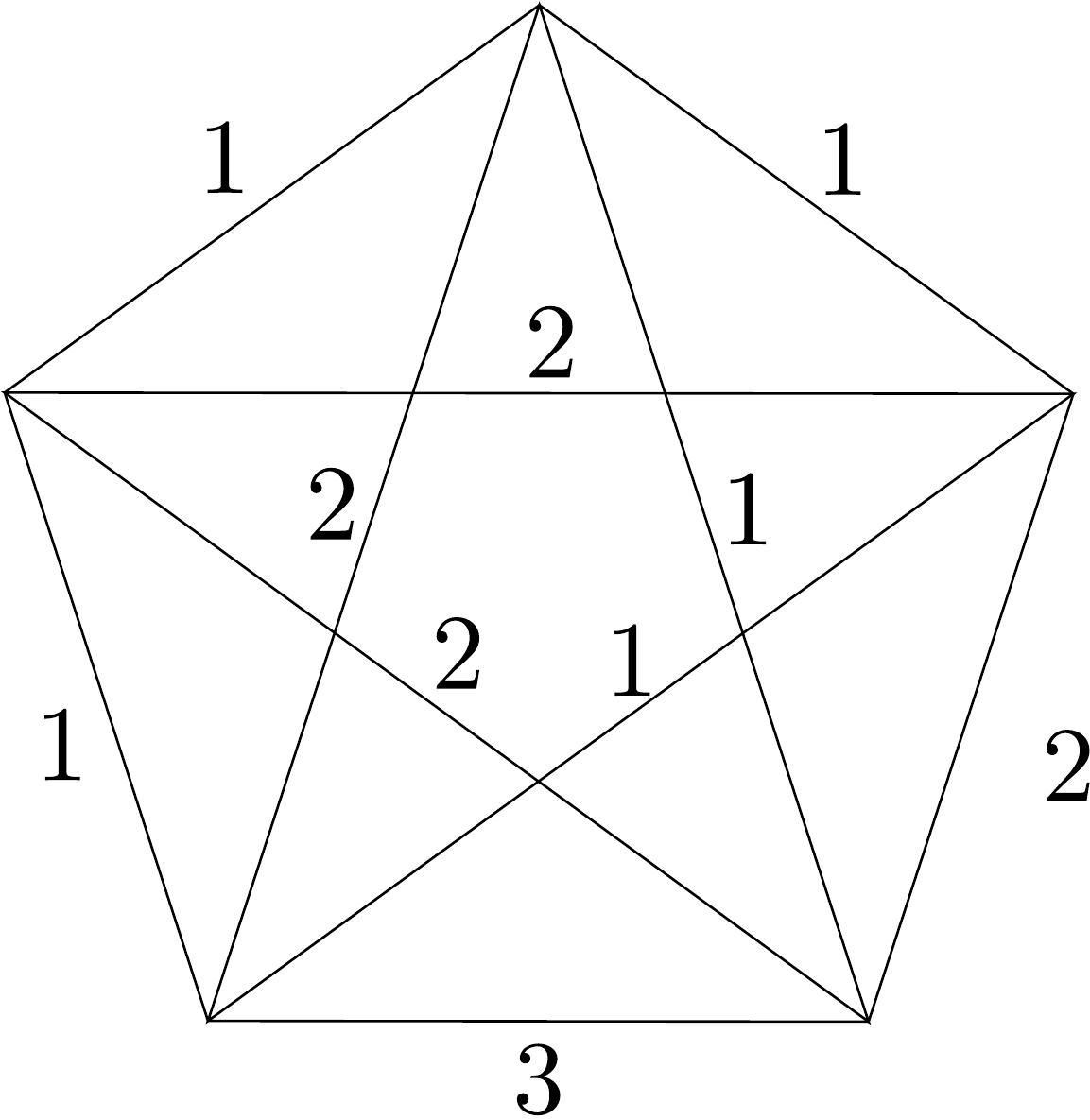}  
  \caption{}
  \label{fig:completion:2}
\end{subfigure}
\begin{subfigure}{.3\textwidth}
  \centering
  \includegraphics[width=.8\linewidth]{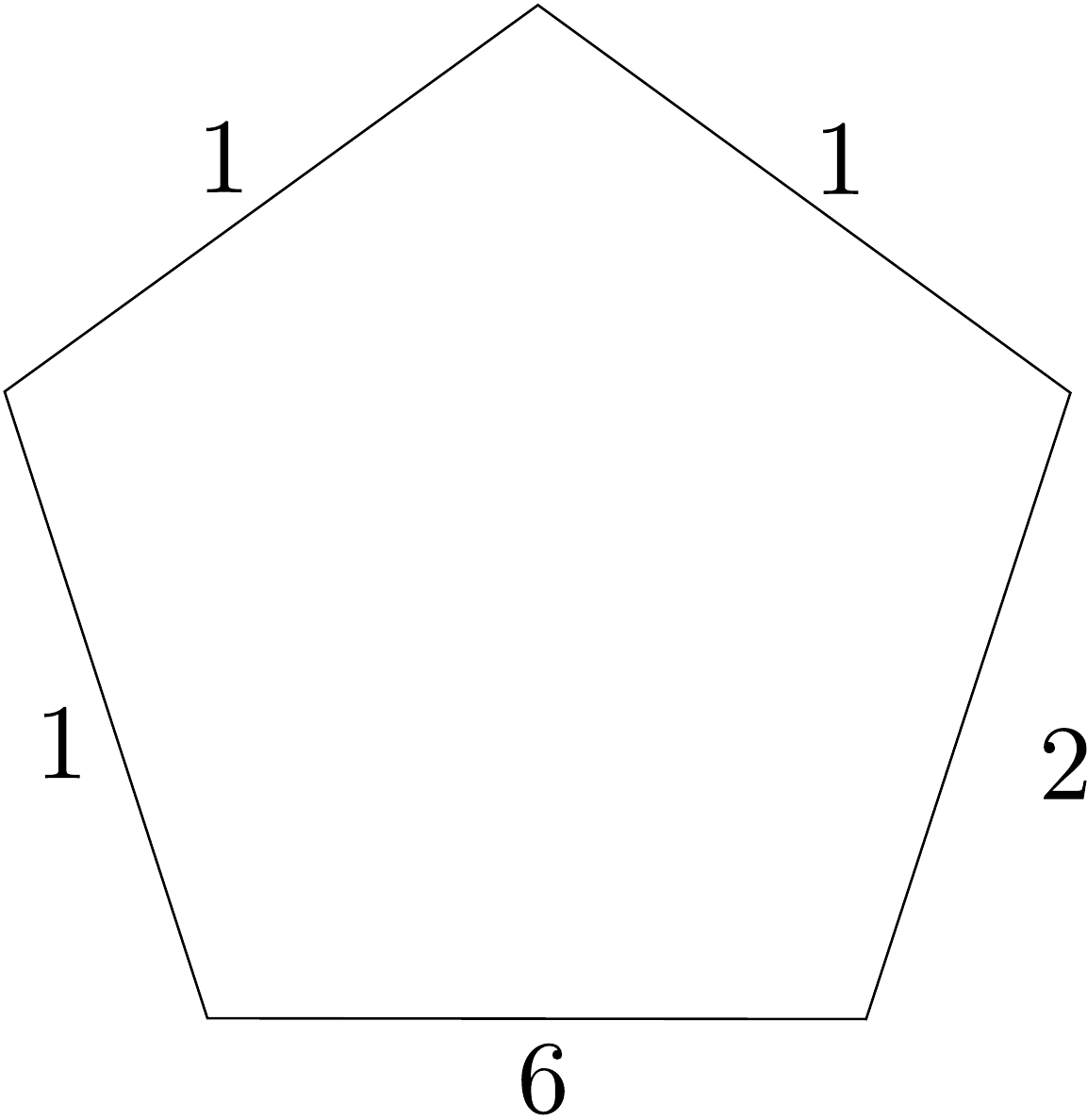}  
  \caption{}
  \label{fig:completion:3}
\end{subfigure}
\caption{Completions}
\label{fig:completion}
\end{figure}

\begin{example}\label{ex:matricspaces}
Consider the class $\mathcal C_\mathbb N$ of all finite integer-valued metric spaces understood as structures in a binary symmetric relational language $L$ with a relation for every nonzero distance (the fact that $d(x,x)=0$ is implicit). In Figure~\ref{fig:completion} we see the following:
\begin{enumerate}
\item[(\subref{fig:completion:1})] An $L$-structure which has an automorphism-preserving completion in $\mathcal C$,
\item[(\subref{fig:completion:2})] one such completion, and
\item[(\subref{fig:completion:3})] an $L$-structure which has no completion in $\mathcal C$.
\end{enumerate}
\end{example}

\begin{definition}\label{defn:locallyfinite}
Let $L$ be a finite language with relations and unary functions equipped with a permutation group $\GammaL$.
Let $\mathcal E$ be a class of finite $\GammaL$-structures and let $\mathcal K$ be a subclass of $\mathcal E$ consisting of irreducible structures. We say
that $\mathcal K$ is a \emph{locally finite subclass of $\mathcal E$} if for every $\str A\in \mathcal K$ and every $\str{B}_0 \in \mathcal E$ there is a finite integer $n = n(\str A, \str {B}_0)$ such that 
every $\GammaL$-structure $\str B$ has a completion $\str B'\in \mathcal K$ provided that it satisfies the following:
\begin{enumerate}
\item\label{lfcond1} For every vertex $v\in B$ we have that $\cl_{\str B}(v)$ lies in a copy of $\str A$,
\item there is a homomorphism-embedding from $\str{B}$ to $\str{B}_0$, and
\item every substructure of $\str{B}$ with at most $n$ vertices has a comple\-tion in $\mathcal K$.
\end{enumerate}
We say that $\mathcal K$ is a \emph{locally finite automorphism-preserving subclass of $\mathcal E$} if
in the condition above, the completion of $\str B$ can always be chosen to be automorphism-preserving.
\end{definition}
\begin{remark}
While in Definition~\ref{defn:locallyfinite} we promise that $\cl_{\str B}(v)$ lies in a copy of $\str A$, the definition of local finiteness for the Hubi\v cka--Ne\v set\v ril theorem (Definition~2.4 from~\cite{Hubicka2016}, similarly also the definition of local finiteness from~\cite{Hubicka2018EPPA}) promises that every irreducible substructure of $\str B$ comes from $\mathcal K$. The difference here is due to the fact that the definition of irreducibility is simplified in this paper and does not work well for general languages with functions. In the applications, both conditions are used to ensure that closures behave well in $\str B$.
\end{remark}

\begin{example}\label{ex:locallyfinite}
Let us observe that the class $\mathcal C_\mathbb N$ from Example~\ref{ex:matricspaces} is a locally finite automorphism-preserving subclass of the class $\mathcal E$ consisting of all finite $L$-structures (for $L$ from Example~\ref{ex:matricspaces}), where all relations are symmetric and irreflexive and every pair of vertices is in at most one relation.
Fix $\str A\in \mathcal K$ and $\str B_0\in \mathcal E$. The assumption on $\mathcal E$ justifies defining a symmetric partial function $d_{\str B_0}\colon B_0^2\to \mathbb N$ where $d(u,u)=0$ and $d(u,v)=\ell$ if and only if $u$ and $v$ are in the relation corresponding to $\ell$ in $\str B_0$. Let $S$ be the set of all integers $\ell$ for which there are vertices $u,v\in B_0$ such that $d(u,v)=\ell$. Since $\str B_0$ is finite, $S$ is also finite. Put $n=\max_{a,b\in S}\lceil\frac{a}{b}\rceil$.

Let $\str B$ be an $L$-structure satisfying the conditions of Definition~\ref{defn:locallyfinite} (since $L$ contains no functions, condition~\ref{lfcond1} is satisfied trivially). The existence of a homomorphism-embedding from $\str B$ to $\str B_0$ implies that all the relations in $\str B$ are also symmetric and irreflexive and every pair of vertices of $\str B$ is in at most one relation, hence we can analogously define a partial function $d_\str B\colon B^2\to \mathbb N$. Moreover, since there is a homomorphism-embedding from $\str B$ to $\str B_0$, we also get that the only non-empty distance relations in $\str B$ are those representing distances from $S$.

Next we define function $d'\colon B^2\to \mathbb N$ by
$$d'(x,y) = \min\limits_{\str P\text{ is a path $x\to y$ in $\str B$}} \|\str P\|,$$
where by a path $x\to y$ we mean a sequence of distinct vertices $x=p_1, \ldots, p_k=y$ such that $d_\str{B}(p_i,p_{i+1})$ is defined for every $i$ satisfying $1\leq i<k$, and we define $\|\str P\|$ as $\sum_{i=1}^{k-1}d_\str{B}(p_i,p_{i+1})$. It is easy to verify that $(B,d')$ is a metric space with distances from $\mathbb N$ and that $d_\str{B} \subseteq d'$ if and only if $\str B$ contains no \emph{non-metric cycles}, that is, sequences of vertices $v_1,\ldots, v_k$ such that $d_\str{B}(v_i,v_{i+1})$ is defined for every $1\leq i \leq k$ (we identify $v_{k+1}=v_1$) and $d_\str B(v_1,v_k) > \sum_{i=1}^{k-1} d_\str{B}(v_i, v_{i+1})$.

Since, clearly, non-metric cycles do not have a completion in $\mathcal C_\mathbb N$, it follows from the definition of $n$ that $\str B$ contains no non-metric cycles and hence has a completion $\str B'=(B,d')$ in $\mathcal C_\mathbb N$ as requested. Moreover, from the canonicity of the definition of $d'$ it follows that this completion is automorphism-preserving and hence we have proved that $\mathcal C_\mathbb N$ is an automorphism-preserving completion of $\str B$.

This construction of $\str B'$ is called the \emph{shortest-path completion} in~\cite{Hubicka2016} and was already used by Solecki~\cite{solecki2005} to prove EPPA for the class of all finite metric spaces and by Ne\v set\v ril~\cite{Nevsetvril2007} to find a Ramsey expansion of the class of all finite metric spaces.
\end{example}

The main theorem of~\cite{Hubicka2018EPPA} can be stated as follows.
\begin{theorem}[\cite{Hubicka2018EPPA}]
\label{thm:hl}
Let $L$ be a finite language with relations and unary functions equipped with a permutation group $\GammaL$, let $\mathcal E$ be a class of finite $\GammaL$-structures which has irreducible-structure faithful EPPA and let $\mathcal K$ be a hereditary
locally finite automorphism-preserving subclass of $\mathcal E$ with the strong amalgamation property, which consists of irreducible structures.
Then $\mathcal K$ has EPPA.
\end{theorem}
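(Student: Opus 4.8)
The plan is to deduce EPPA for $\mathcal K$ from the EPPA we are \emph{given} for the ambient class $\mathcal E$, using an automorphism-preserving completion to land back inside $\mathcal K$. Fix $\str A\in\mathcal K$; since $\mathcal K\subseteq\mathcal E$ we may view $\str A$ as an object of $\mathcal E$. The witness will be produced in two stages: first build an irreducible-structure faithful EPPA-witness $\str D\in\mathcal E$ for $\str A$ (so $\str A\subseteq\str D$, every partial automorphism of $\str A$ extends to some $g\in\Aut(\str D)$, and every irreducible substructure of $\str D$ is carried into $A$ by an automorphism of $\str D$); then complete $\str D$ to a structure $\str B\in\mathcal K$ and show $\str B$ is the desired EPPA-witness. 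To prepare the completion I would first fix an auxiliary bound $\str B_0\in\mathcal E$ and read off the integer $n=n(\str A,\str B_0)$ furnished by the local finiteness of $\mathcal K$ (Definition~\ref{defn:locallyfinite}); $\str B_0$ serves as a homomorphism bound controlling which configurations may occur, and I would choose it to be \emph{obstruction-free}, meaning that a homomorphism-embedding into $\str B_0$ together with completability of all bounded substructures certifies completability of the whole.

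The heart of the argument is to verify the three hypotheses of Definition~\ref{defn:locallyfinite} for $\str D$, since these yield the completion $\str B\in\mathcal K$. Condition~\ref{lfcond1}, that $\cl_{\str D}(v)$ lies in a copy of $\str A$, follows from irreducible-structure faithfulness once one uses the richer notion of irreducibility behind that definition, under which closures count as irreducible (this is exactly the discrepancy flagged in the remark after Definition~\ref{defn:locallyfinite}). The homomorphism-embedding $\str D\to\str B_0$ is where faithfulness does its real work: because every irreducible substructure of $\str D$ is a copy of one in $\str A$, I would arrange $\str B_0$ so that $\str D$ maps into it preserving the irreducible pieces. The remaining condition---that every substructure of $\str D$ on at most $n$ vertices has a completion in $\mathcal K$---is then obtained by combining this homomorphism-embedding with the strong amalgamation property: each such substructure inherits the homomorphism-embedding into the obstruction-free $\str B_0$, so it carries no local obstruction to completion, and its irreducible pieces, being copies of irreducible substructures of $\str A\in\mathcal K$, can be glued along their shared vertices by iterated strong amalgamation into a single $\mathcal K$-structure. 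Local finiteness then delivers an \emph{automorphism-preserving} completion $\str B\in\mathcal K$ of $\str D$.

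It remains to read off the EPPA statement. Since $\str A\in\mathcal K$ is irreducible, every pair of its vertices is already related, so the completion adds no relations inside $\str A$ and $\str A\subseteq\str D\subseteq\str B$ remain substructures; moreover a partial automorphism of $\str A$ means the same thing whether $\str A$ is read in $\mathcal E$ or in $\mathcal K$. By the automorphism-preserving clause of Definition~\ref{defn:completion}, every $g\in\Aut(\str D)$ extends to some $\hat g\in\Aut(\str B)$. Hence a given partial automorphism $f$ of $\str A$ first extends to $g\in\Aut(\str D)$ by the choice of $\str D$, and then $f\subseteq g\subseteq\hat g\in\Aut(\str B)$, so $\str B$ is an EPPA-witness for $\str A$ in $\mathcal K$.

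I expect the genuine obstacle to be the construction promised in the second paragraph: producing a faithful $\mathcal E$-EPPA witness $\str D$ that \emph{also} homomorphism-embeds into the obstruction-free bound $\str B_0$ (equivalently, all of whose bounded substructures are completable). This does not follow from faithfulness alone, because faithfulness controls only the \emph{irreducible} substructures of $\str D$, whereas obstructions to completion can be reducible---the prototype being a non-metric cycle with uncoloured ``diagonals'', as in Example~\ref{ex:locallyfinite}, whose every irreducible (here: single-edge) piece is perfectly completable yet which has no completion as a whole. Ruling these out is precisely what forces the use of the strengthened Herwig--Lascar machinery, whose extra power---permuting the language via $\GammaL$ and attaching unary functions---lets one encode completability into the irreducible part of an enriched structure, so that faithfulness can finally see it. A secondary point to keep honest is that the completion must remain automorphism-preserving throughout, as the lift $g\mapsto\hat g$ in the third paragraph relies on it.
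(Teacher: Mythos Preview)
Your overall architecture is correct and matches the paper's: take an EPPA-witness in $\mathcal E$, verify the hypotheses of Definition~\ref{defn:locallyfinite}, and pass to an automorphism-preserving completion in $\mathcal K$; the final paragraph (lifting $g\in\Aut(\str D)$ to $\hat g\in\Aut(\str B)$) is exactly how the argument concludes. You also correctly locate the only real difficulty.

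Where your proposal diverges from the paper is in the \emph{roles} of the two auxiliary structures and in how the gap is closed. In the paper's sketch, the irreducible-structure faithful $\mathcal E$-witness is not the structure one tries to complete; it \emph{is} the bound $\str B_0$ of Definition~\ref{defn:locallyfinite}. One then constructs a \emph{separate} EPPA-witness $\str B$ for $\str A$ (by iterating the valuation-function construction) which, by design, comes with a homomorphism-embedding into $\str B_0$ and whose substructures of size $\le n(\str A,\str B_0)$ are completable in $\mathcal K$. The faithful witness $\str B_0$ is never itself asked to satisfy condition~(3); it only serves as the target controlling which irreducible pieces can occur. Your write-up instead asks the faithful witness $\str D$ to satisfy all three conditions and invokes a second, unspecified ``obstruction-free'' $\str B_0$; the attempt to get condition~(3) for $\str D$ from strong amalgamation does not go through, essentially for the reason you yourself give (non-metric-cycle-type obstructions are reducible and are invisible to faithfulness).

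So the gap you flag in your last paragraph is genuine and is precisely the content of the theorem; the fix is not to massage $\str D$ into compliance but to use $\str D$ as $\str B_0$ and build a fresh EPPA-witness $\str B$ over it via the iterated valuation-function machinery of~\cite{Hubicka2018EPPA}. That construction is what guarantees, simultaneously, that $\str B$ is an EPPA-witness for $\str A$, that there is a homomorphism-embedding $\str B\to\str B_0$, and that small substructures of $\str B$ are $\mathcal K$-completable.
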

Here $\mathcal K$ is \emph{hereditary} if whenever $\str B\in \mathcal K$ and $\str A\subseteq \str B$, then also $\str A\in\mathcal K$. We will not define what the \emph{strong amalgamation property} is (see~\cite{Hubicka2018EPPA}), but all classes for which we will use Theorem~\ref{thm:hl} will have this property.

Since Theorem~\ref{thm:hl} has a form of implication, we will need the following theorem from~\cite{Hubicka2018EPPA} to supply us with the base EPPA class $\mathcal E$.
\begin{theorem}[\cite{Hubicka2018EPPA}]
\label{thm:eppanoaxioms}
Let $L$ be a finite language with relations and unary functions equipped with a permutation group $\GammaL$. Then the class of all finite $\GammaL$-structures has irreducible-structure faithful EPPA.
\end{theorem}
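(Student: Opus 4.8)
The plan is to construct the EPPA-witness $\str B$ by a \emph{valuation} (blow-up) construction in the spirit of Hodkinson--Otto and Solecki, adapted so that it also carries the language permutation. The decisive simplification here is that the class in question is that of \emph{all} finite $\GammaL$-structures, so there are no forbidden configurations and hence no completion obstacles of the kind that Definition~\ref{defn:completion} and Definition~\ref{defn:locallyfinite} are designed to handle: we are in the ``free'' setting. Thus, given $\str A$, there are exactly two things to arrange, namely that every partial automorphism of $\str A$ extends to an automorphism of $\str B$, and that $\str B$ is irreducible-structure faithful.

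First I would realize the underlying partial bijections inside a finite group. Let $P$ be the (finite) set of all partial automorphisms $p=(p_A,p_L)$ of $\str A$, each consisting of a partial bijection $p_A$ of $A$ and a language permutation $p_L\in\GammaL$. Using the classical fact (going back to Hrushovski) that finitely many partial bijections of a finite set extend to permutations of a finite superset generating a finite group, I would produce a finite set $D\supseteq A$ and a finite group $G$ of pairs $(\gamma,\sigma)\in\Sym(D)\times\GammaL$ such that every $p\in P$ is the restriction of some $(\gamma,p_L)\in G$. The pairing with $\GammaL$ is the first delicate point: the vertex-permutations must be lifted so that a group element acts on $D$ and simultaneously twists the language by its second coordinate in a consistent way, which can be arranged (enlarging $D$ if necessary) because $\GammaL$ is finite.

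Next I would place the structure. Putting $G$-invariant relations directly on $D$ fails: since a partial automorphism need not extend to an automorphism of $\str A$, the extending $\gamma$ need not preserve the naive ``disjoint copies'' structure, so inducedness and faithfulness both break. Instead I would blow up by a large finite set of colors (valuations), embed $\str A$ as a distinguished copy with a fixed color, and declare each relation to hold only on tuples whose colors satisfy a genericity (coherence) condition. The condition is to be chosen so that (i) the distinguished copy induces exactly $\str A$; (ii) each $(\gamma,p_L)\in G$, combined with a suitable permutation of the colors, is an automorphism of $\str B$ with language component $p_L$, so that relations are transported and twisted by $p_L$ and the corresponding $p$ is extended; and (iii) every irreducible substructure is forced to be color-coherent, hence to lie inside a single copy of $\str A$, which by transitivity of the combined group-and-color action can be mapped onto the distinguished copy by an automorphism, yielding irreducible-structure faithfulness. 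The partial unary functions are carried along by the action; here one checks that $\cl_{\str B}(v)$ lands in a single copy of $\str A$, so that the faithfulness and inducedness arguments, which for functions are phrased via closures, go through.

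I expect the main obstacle to be the simultaneous bookkeeping of the language permutation and the coloring. Concretely, a twisted group element $(\gamma,p_L)$ permutes the relation symbols, so the coherence condition on colors must itself be invariant under these twists; otherwise an automorphism realizing a nontrivial $p_L$ would fail to preserve $\str B$. Finding a coloring scheme that is at once fine enough to force coherence of all irreducible sets (for faithfulness and inducedness) and symmetric enough to be preserved by every twisted group element is the technical heart of the construction. Once such a scheme is in place, both the extension of partial automorphisms and the faithfulness clause follow from the homogeneity of the group-and-color action on coherent copies.
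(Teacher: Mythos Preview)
The paper does not prove this theorem; it is quoted as a black box from~\cite{Hubicka2018EPPA}. The only information given here about its proof is the remark that both Theorems~\ref{thm:hl} and~\ref{thm:eppanoaxioms} ``are proved by an application of the method of valuation functions'' and that ``Theorem~\ref{thm:eppanoaxioms} is proved by giving an explicit construction of EPPA-witnesses.'' So there is no in-paper proof to compare your proposal against.

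That said, your outline diverges from the construction alluded to in one structural respect worth flagging. You first pass to a finite supergroup $G\leq\Sym(D)\times\GammaL$ on a superset $D\supseteq A$ and only then introduce a coloring. The valuation-function construction referenced by the paper (and used, in a variant, in Sections~\ref{sec:nonbip} and~\ref{sec:bip}) skips the group-extension step entirely: one works directly over $A$, taking vertices of $\str B$ to be pairs $(a,\chi)$ with $a\in A$ and $\chi$ a valuation, and defining relations by a genericity condition on the valuations alone. An automorphism of $\str B$ extending a given partial automorphism $(p_A,p_L)$ is then obtained by extending $p_A$ to \emph{any} permutation of $A$ (no group structure needed), choosing the language permutation $p_L$, and adjusting valuations accordingly. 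Your two-stage route can be made to work, but the intermediate group is unnecessary overhead, and your description of the coherence condition on colors---the point you yourself identify as the technical heart---remains a promissory note rather than a construction.
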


Note that combining Example~\ref{ex:locallyfinite} with Theorems~\ref{thm:hl} and~\ref{thm:eppanoaxioms} gives a proof of Solecki's result that finite metric spaces have EPPA (to prove that $\mathcal E$ has EPPA, one has to use Theorem~\ref{thm:eppanoaxioms} for a finite fragment of $L$ to get an EPPA-witness for a given $\str A\in \mathcal E$). Both Theorems~\ref{thm:hl} and~\ref{thm:eppanoaxioms} are proved by an application of the method of valuation functions, a variant of which we will also use in this paper. More precisely, Theorem~\ref{thm:eppanoaxioms} is proved by giving an explicit construction of EPPA-witnesses. Theorem~\ref{thm:hl} iteratively applies the method of valuation functions to produce, given $\str A\in\mathcal K$ and its irreducible-structure faithful EPPA-witness $\str B_0\in \mathcal E$, an EPPA-witness $\str B$ satisfying the conditions of Definition~\ref{defn:locallyfinite}, the automorphism-preserving completion $\str B'$ of $\str B$ is then the desired EPPA-witness for $\str A$ in $\mathcal K$.

\subsection{Metrically homogeneous graphs}\label{sec:methom}
Most of the details of Cherlin's metric spaces are not important for this paper. We only give the necessary definitions and facts and refer the reader to~\cite{Cherlin2011b}, \cite{Aranda2017} or~\cite{Konecny2018bc}.

All the metric spaces we will work with have distances from $\{0,1,\ldots,\delta\}$ for some integer $\delta$. Therefore, we will view them interchangeably as pairs $(A,d)$ where $d$ is the metric, as complete graphs with edges labelled by $\{1,\ldots,\delta\}$ (we will call these \emph{complete $[\delta]$-edge-labelled graphs}) where the labels of every triangle satisfy the triangle inequality, and as relational structures with trivial $\GammaL$ and binary symmetric irreflexive relations $R^1,\ldots, R^\delta$ (distance $0$ is not represented) such that every pair of vertices is in exactly one relation and the triangle inequality is satisfied. The middle point of view works best with the notion of completions: Given a (not necessary complete) $[\delta]$-edge-labelled graph $\str G$, a $[\delta]$-edge-labelled graph $\str G'$ is a completion of $\str G$ if $\str G$ is a non-induced subgraph of $\str G'$ and the labels are preserved.

We will say that two vertices are at distance $a$ and that they are connected by an edge of length $a$ interchangeably. In particular, when we talk about an edge of a $[\delta]$-edge-labelled graph, we mean a pair of vertices such that their distance is defined, it does not necessarily mean that they are at distance 1.

A major part of Cherlin's list of the classes of finite metric spaces which embed into the path-metric space of a countably infinite metrically homogeneous graph consists of certain 5-parameter classes $\mathcal A^\delta_{K_1,K_2,C_0,C_1}$. These are classes of metric spaces with distances $\{0,1,\ldots,\delta\}$ (we call $\delta$ the \emph{diameter} of such spaces) omitting certain families of triangles (e.g. triangles of short odd perimeter or triangles of long even perimeter).

A special case of these classes are the \emph{antipodal} classes, where the five parameters have only two degrees of freedom. Here we will denote the antipodal classes as $\mathcal A^\delta_K$, where $1\leq K\leq \frac{\delta}{2}$, or $K=\delta$.\footnote{If $K\neq \delta$, the other parameters are then defined as $K_1=K$, $K_2 = \delta-K$, $C_0=2\delta+2$ and $C_1=2\delta+1$, if $K=\delta$, then $K_1=\infty$ and the other parameters are as before.} $\mathcal A^\delta_K$ is defined as the class of all finite metric spaces with distances from $\{0,1,\ldots,\delta\}$ such that they contain no triangle with distances $a,b,c$ for which at least one of the following holds:
\begin{enumerate}
\item $a+b+c > 2\delta$,
\item $a+b+c$ is odd and $a+b+c < 2K$, or
\item $a+b+c$ is odd and $a+b+c > 2(\delta-K) + 2\min(a,b,c)$.
\end{enumerate}
However, for our purposes, we need only the following fact:
\begin{fact}[Antipodal spaces]\label{fact:antipodal}
The following holds in every class $\Aclass$ of antipodal metric spaces from Cherlin's list:
\begin{enumerate}
  \item The edges of length $\delta$ form a matching (that is, for every vertex there is at most one vertex at distance $\delta$ from it) and for every $\str A\in\Aclass$ there is a unique $\str B\in\Aclass$ such that $\str A\subseteq \str B$, the edges of length $\delta$ form a perfect matching in $\str B$ and every edge of length $\delta$ in $\str B$ has at least one endpoint from $\str A$.
  \item\label{pt2} For every pair of vertices $u,v$ such that $d(u,v)=\delta$ and for every vertex $w$ we have $d(u,w)+d(v,w)=\delta$.
  \item If one selects exactly one vertex from each edge of length $\delta$, the metric space they induce belongs to a special (non-antipodal) class of diameter $\delta-1$ which we will call $\mathcal B^\delta_K$.\footnote{It is in fact $\mathcal A^{\delta-1}_{K_1,K_2,C_0,C_1}$ for $K_1=K$, $K_2 = \delta-K$, $C_0=2\delta+2$ and $C_1=2\delta+1$.} And the other way around, one can get an antipodal metric space from every metric space $\str M\in \mathcal B^\delta_K$ by taking two disjoint copies of $\str M$, connecting every vertex to its copy by an edge of length $\delta$ and using point~\ref{pt2} to fill-in the missing distances.
\end{enumerate}
\end{fact}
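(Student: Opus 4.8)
The plan is to treat the five assertions in increasing order of difficulty, using only the three forbidden-triangle conditions defining $\Aclass$ together with the triangle inequality.

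First I would dispose of the ``local'' statements. For the matching claim in point~1, suppose $d(u,v)=d(u,w)=\delta$ for distinct $v,w$; the triangle $u,v,w$ then has perimeter $2\delta+d(v,w)$, which violates condition~(1) unless $d(v,w)=0$, i.e.\ $v=w$. For point~\ref{pt2}, the triangle inequality gives $d(u,w)+d(v,w)\ge d(u,v)=\delta$, while condition~(1) applied to the triangle $u,v,w$ gives $\delta+d(u,w)+d(v,w)\le 2\delta$, hence $d(u,w)+d(v,w)=\delta$. Point~\ref{pt2} is the engine for everything that follows: it shows that once a vertex has an antipode, all of its distances are determined by those of the antipode via $x\mapsto\delta-x$.

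Next, for the extension part of point~1 I would build $\str B$ from $\str A$ by adjoining, for each vertex $a\in A$ that has no antipode in $\str A$, a new vertex $a^\ast$, setting $d(a^\ast,w)=\delta-d(a,w)$ for every old $w$ and $d(a^\ast,b^\ast)=d(a,b)$ for two new vertices (the latter being forced by applying point~\ref{pt2} twice). Point~\ref{pt2} shows that these values are the only ones compatible with $\str B$ having the required antipodal structure, which yields uniqueness immediately; the substantive task is to check $\str B\in\Aclass$. Since every triangle of $\str B$ meets the new vertices in $0$, $1$, $2$, or $3$ points, I would verify conditions~(1)--(3) case by case, in each case rewriting the distances to starred vertices as $\delta-(\text{old distance})$ and reducing to a triangle inequality or a forbidden-triangle condition already known to hold among the old vertices.

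Finally, point~3 is the heart of the statement. Selecting one endpoint of each $\delta$-edge yields a subspace with no distance $\delta$ (the $\delta$-edges form a matching), so its diameter is at most $\delta-1$; I would then check that its triangles satisfy exactly conditions~(2)--(3) with $K_1=K$, $K_2=\delta-K$, observing that for $C_0=2\delta+2,\ C_1=2\delta+1$ the ``large perimeter'' clause of $\mathcal B^\delta_K$ reduces to perimeter $\le 2\delta$, which is inherited from condition~(1). For the converse, given $\str M\in\mathcal B^\delta_K$ I would take two disjoint copies, join each vertex to its copy by a $\delta$-edge, and fill the remaining distances by the rule $d(x_0,y_1)=\delta-d(x,y)$ dictated by point~\ref{pt2}, then verify conditions~(1)--(3); the only non-trivial triangles are those with two vertices in one copy and one in the other, with side lengths $c,\ \delta-a,\ \delta-b$ arising from a triangle $a,b,c$ of $\str M$.

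I expect the main obstacle to be exactly this verification, which occurs both in the extension of point~1 and in the doubling of point~3. The key phenomenon is a duality: substituting $(a,b)\mapsto(\delta-a,\delta-b)$ while keeping $c$ fixed turns a perimeter $P=a+b+c$ into $P'=2\delta+2c-P$, preserves parity, and interchanges the roles of the small-odd-perimeter condition~(2) and the large-odd-perimeter condition~(3). Making this precise requires tracking which of the three sides realises $\min(a,b,c)$, since the bound in condition~(3) refers to that minimum; the case analysis on the location of the minimum (and the corresponding minimum among $c,\delta-a,\delta-b$) is where the real work lies, and I would organise it by symmetry to keep the number of cases small.
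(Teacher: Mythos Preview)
The paper does not prove this statement. Fact~\ref{fact:antipodal} is stated without proof as a summary of known properties of Cherlin's antipodal classes, with the reader referred to~\cite{Cherlin2011b}, \cite{Aranda2017}, and \cite{Konecny2018bc} for details (see the opening of Section~\ref{sec:methom}: ``We only give the necessary definitions and facts and refer the reader to\ldots'').

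Your proposal is a direct verification from the forbidden-triangle description of $\Aclass$, and the overall structure is sound: points~1 (matching) and~\ref{pt2} follow immediately from condition~(1) plus the triangle inequality, and you correctly identify the antipodal duality $(a,b,c)\mapsto(\delta-a,\delta-b,c)$ interchanging conditions~(2) and~(3) as the mechanism behind both the extension in point~1 and the doubling in point~3. One caveat: your verification for point~3 presumes the full five-parameter definition of $\mathcal A^{\delta-1}_{K_1,K_2,C_0,C_1}$, which the paper deliberately does not spell out; to make the argument self-contained you would need to import that definition from Cherlin's catalogue. The case analysis on the location of $\min(a,b,c)$ that you flag as ``the real work'' is indeed where the argument in the references lives, and your plan to exploit the symmetry $(a,b)\leftrightarrow(\delta-a,\delta-b)$ is the standard way through it.
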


There are two kinds of antipodal classes with different combinatorial behaviour --- those that come from a countable bipartite metrically homogeneous graph (they correspond to the case $K=\delta$) and those that come from a non-bipartite one. We will call the first the \emph{bipartite} classes (their members have the property that they contain no triangles, or more generally cycles, of odd perimeter) and we will call the others the \emph{non-bipartite} ones. This is slightly misleading, because some of the finite metric subspaces of the path-metric of a non-bipartite metrically homogeneous graph are surely bipartite, but it should not cause any confusion in this paper. The non-bipartite class of antipodal metric spaces of diameter $3$ is closely connected to switching classes of graphs and to two-graphs (see~\cite{eppatwographs}).

The following fact summarizes results from~\cite{Aranda2017} about the non-bipartite odd diameter antipodal classes.
\begin{fact}\label{fact:completion}
Let $\Aclass$ be a non-bipartite class of antipodal metric spaces of odd diameter $\delta$. Let $\str A$ be a $[\delta]$-edge-labelled graph such that the edges of length $\delta$ of $\str A$ form a perfect matching and furthermore for every $u,v,w\in A$ such that $d_\str{A}(u,v)=\delta$ and $w\neq u,v$, either $w$ is not connected by an edge to either of $u,v$, or $d_\str{A}(u,w)+d_\str{A}(v,w)=\delta$. Suppose furthermore that $\str A$ contains none of the finitely many cycles forbidden in $\mathcal B^\delta_K$.

Let $f\colon {A\choose 2}\to \{0,1\}$ be a mapping satisfying the following.
\begin{enumerate}
  \item Whenever $uv$ is an edge of $\str A$, then $f(uv) \equiv d_\str{A}(u,v)\mod 2$.
  \item Let $u_1v_1$ and $u_2v_2$ be two different edges of length $\delta$ of $\str A$. Then $f(u_1u_2)=f(v_1v_2)$, $f(u_1v_2)=f(u_2v_1)$ and $f(u_1u_2)\neq f(u_1v_2)$.
\end{enumerate}

Then there is $\overbar{\str A}\in \Aclass$ such that the following holds.
\begin{enumerate}
  \item $\overbar{\str A}$ is a completion of $\str A$ with the same vertex set,
  \item for every edge $uv$ of $\overbar{\str A}$ it holds that $f(uv) \equiv d_{\overbar{\str{A}}}(u,v)\mod 2$, and
  \item Every automorphism of $\str A$ which preserves values of $f$ is also an automorphism of $\overbar{\str A}$.
\end{enumerate}
\end{fact}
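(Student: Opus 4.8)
The plan is to reduce the antipodal completion to a completion in the associated non-antipodal class $\mathcal B^\delta_K$ of diameter $\delta-1$ (the correspondence in the last point of Fact~\ref{fact:antipodal}), to carry it out while respecting the parities prescribed by $f$, and then to double back up. The role of $f$ is to record the parity that each completed distance must have: since the completion will be antipodal, point~\ref{pt2} of Fact~\ref{fact:antipodal} forces $d_{\overbar{\str A}}(u,v)+d_{\overbar{\str A}}(u,v')=\delta$ whenever $v'$ is the antipode of $v$, and as $\delta$ is odd this is exactly compatible with the relation $f(uv)\neq f(uv')$ guaranteed by condition~2 on $f$.

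First I would extract a combinatorial consequence of the hypothesis on $\str A$: for every edge $uu'$ of length $\delta$ and every other vertex $w$, the vertex $w$ is joined in $\str A$ to both or to neither of $u,u'$. Indeed, if $w$ were joined to exactly one of them, the required identity $d_\str A(u,w)+d_\str A(u',w)=\delta$ would involve an undefined distance, so neither alternative of the hypothesis could hold. Fixing a section $S\subseteq A$ containing exactly one vertex of each $\delta$-edge, this lets me fold $\str A$ onto $S$: an edge $st'$ (resp.\ $s't'$) with $s,t\in S$ is recorded as the value $\delta-d_\str A(s,t')$ (resp.\ $d_\str A(s',t')$) on the pair $st$, and the antipodal identities force all preimages of a given pair to agree, so the folded partial $[\delta-1]$-edge-labelled graph $\str A_S$ is well defined. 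A short parity computation using condition~2 on $f$ shows every recorded length has parity $f(st)$, and the avoidance of the cycles forbidden in $\mathcal B^\delta_K$ is inherited from the hypothesis that $\str A$ omits them.

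The heart of the argument is to complete $\str A_S$ to a full metric $\str M\in\mathcal B^\delta_K$ with $d_\str M(s,t)\equiv f(st)\pmod 2$ for all pairs. I would define $d_\str M(s,t)$ as the length of a shortest walk from $s$ to $t$ in $\str A_S$ whose length has parity $f(st)$, capped at the largest admissible value of that parity when no short enough walk exists. Here non-bipartiteness is essential, and is the reason the odd-diameter non-bipartite case is treated separately from the bipartite one: in a non-bipartite component both parities of walk are available between any two vertices (traverse an odd cycle), so a parity-correct walk always exists, while on pairs lying in distinct components the prescribed parity can be realised freely. One then checks that $\str M$ is a metric (triangle inequalities from walk-minimality), that its parities are correct by construction, and crucially that it omits all the forbidden triangles defining $\mathcal B^\delta_K$ and hence $\Aclass$.

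Finally I would double $\str M$ as in the last point of Fact~\ref{fact:antipodal}: take $\str M$ together with its antipodal copy on $A\setminus S$, restore the $\delta$-matching, and fill the cross distances by $d_{\overbar{\str A}}(s,t')=\delta-d_\str M(s,t)$. This produces $\overbar{\str A}\in\Aclass$ which, because the folding was distance-faithful, is a completion of $\str A$ with the same vertex set and satisfies $d_{\overbar{\str A}}(uv)\equiv f(uv)\pmod 2$ everywhere. For the automorphism clause, note that $d_\str M$ was defined by a canonical rule depending only on the edge-lengths of $\str A$ and on $f$; re-selecting the section merely applies the antipodal involution, under which this rule is equivariant, so $\overbar{\str A}$ does not depend on $S$, and any automorphism of $\str A$ preserving $f$ (hence preserving the $\delta$-matching, the antipodal involution, and all walk-lengths) is automatically an automorphism of $\overbar{\str A}$. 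The main obstacle is the completion step: proving that forcing every distance to have the prescribed parity via shortest parity-correct walks never creates a triangle of short odd perimeter or of long even perimeter, i.e.\ that the output genuinely lands in $\Aclass$. This is precisely where the hypothesis that $\str A$ contains none of the forbidden $\mathcal B^\delta_K$-cycles, together with non-bipartiteness, must be used in a careful perimeter-by-perimeter case analysis.
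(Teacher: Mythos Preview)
Your overall architecture---fold onto a section, complete in $\mathcal B^\delta_K$, double back up via antipodality---matches the paper's. The divergence is at the completion step, and it is significant.

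The paper does not design a parity-aware completion. It invokes the already-established completion procedure for $\mathcal A^{\delta-1}_{K,\delta-K,2\delta+2,2\delta+1}$ (Theorem~4.9 and Lemma~4.18 of Aranda et al.) as a black box, and adds one observation: that procedure respects the equivalence $a\sim\delta-a$ on edge labels, in the sense that equivalent inputs yield equivalent outputs. Since $\delta$ is odd, $a$ and $\delta-a$ always have opposite parities, so \emph{after} completing one may replace each new distance $a$ by $\delta-a$ wherever needed to match the parity prescribed by $f$, and the result remains in $\mathcal B^\delta_K$. This separation---complete first, flip parities afterwards---is what keeps the argument short: all the perimeter analysis lives in the cited paper, and the parity adjustment costs nothing. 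Automorphism-preservation and independence from the choice of section then follow because the cited completion is canonical and the $a\sim\delta-a$ equivariance absorbs changes of section.

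Your route instead bakes the parity constraint into the completion via shortest parity-correct walks with a cap. That is a genuinely different construction, and the obstacle you flag at the end is real: verifying that this lands in $\mathcal B^\delta_K$ would amount to reproving the Aranda et al.\ completion theorem under an extra constraint. There is also a slip in your appeal to non-bipartiteness: the \emph{class} $\Aclass$ being non-bipartite does not prevent a particular $\str A_S$ from being a bipartite graph (take $\str A$ to be two disjoint $\delta$-edges, for instance), so ``traverse an odd cycle'' need not be available within a connected component, and your cap would have to carry more weight than you suggest. The paper's $a\sim\delta-a$ trick sidesteps both difficulties at once.
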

Such $\overbar{\str A}$ can be constructed by picking one vertex from each edge of length $\delta$, considering this auxiliary metric space of diameter $\delta-1$, completing it using Theorem~4.9 from~\cite{Aranda2017} (see also Lemma~4.18 from the same paper, or~\cite{Hubickacycles2018}) and then pulling this completion back using $f$ to decide parities of the edges. The proof then uses the observation that the completion procedure for $\mathcal A^{\delta-1}_{K_1,K_2,C_0,C_1}$ from~\cite{Aranda2017} preserves the equivalence ``$a\sim \delta-a$''. That is, we say that two $[\delta-1]$-edge-labelled graphs $\str G$ and $\str G'$ are equivalent if they share the same vertex set and the same edge set and every edge has either the same label in both $\str G$ and $\str G'$, or it has label $a$ in $\str G$ and $\delta-a$ in $\str G'$. The completion procedure then produces equivalent graphs whenever given equivalent graphs.

\section{The odd diameter non-bipartite case}\label{sec:nonbip}
EPPA for the even diameter non-bipartite case was proved in~\cite{Aranda2017}. In this section we prove the following proposition.
\begin{prop}\label{prop:oddnonbip}
Let $\Aclass$ be a non-bipartite class of antipodal metric spaces of odd diameter. Then for every $\str A\in \Aclass$ there is $\str B\in \Aclass$ which is an EPPA-witness for $\str A$.
\end{prop}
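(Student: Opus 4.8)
The plan is to deduce Proposition~\ref{prop:oddnonbip} from the strengthened Herwig--Lascar theorem (Theorem~\ref{thm:hl}) by choosing a suitable language equipped with a permutation group $\GammaL$ and a suitable subclass $\mathcal K$ of all finite $\GammaL$-structures. The base class $\mathcal E$ with irreducible-structure faithful EPPA will simply be the class of all finite $\GammaL$-structures, supplied by Theorem~\ref{thm:eppanoaxioms}; the whole difficulty will then be to encode the members of $\Aclass$ as irreducible $\GammaL$-structures forming a hereditary, strongly amalgamable, \emph{locally finite automorphism-preserving} subclass of $\mathcal E$, and finally to transfer EPPA back to the metric setting.

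First I would fix the language: binary symmetric irreflexive relations $\rel{}{1},\dots,\rel{}{\delta}$ for the distances, together with a unary function $F$ sending each vertex to its antipode (total on a member of $\Aclass$ once we pass, via Fact~\ref{fact:antipodal}, to the canonical extension in which the $\delta$-edges form a perfect matching). The permutation group is $\GammaL=\{\mathrm{id},\sigma\}$, where $\sigma(\rel{}{a})=\rel{}{\delta-a}$ for $1\le a<\delta$ and $\sigma(\rel{}{\delta})=\rel{}{\delta}$; since $\delta$ is odd, $a$ and $\delta-a$ differ in parity, so $\sigma$ realises the ``switching'' $a\leftrightarrow\delta-a$ underlying the antipodal (two-graph) symmetry, and one checks using Fact~\ref{fact:antipodal}(2) that $\sigma$ is compatible with the antipodal constraints. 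Because each member of $\Aclass$ is a complete metric space, such a structure is irreducible (its binary relations form a complete graph, cf.\ the Example following the definition of irreducibility). A partial automorphism with trivial language part is precisely a partial isometry respecting antipodes, and since every isometry preserves antipodes these account for all the partial isometries we must extend; the partial automorphisms whose language part is $\sigma$ are ``switching isometries'', so passing to this language only strengthens metric EPPA, and this strengthening is exactly what will force the completion below to be automorphism-preserving.

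Having set this up, I would let $\mathcal K$ consist of the $\GammaL$-structures obtained from the members of $\Aclass$ as above. Heredity and strong amalgamation are inherited from the corresponding properties of the non-antipodal base class $\mathcal B^\delta_K$ via the correspondence in Fact~\ref{fact:antipodal}(3), and irreducibility holds as noted. The heart of the argument is verifying that $\mathcal K$ is a locally finite automorphism-preserving subclass of $\mathcal E$: given $\str A\in\mathcal K$ and $\str B_0\in\mathcal E$, I must produce a bound $n=n(\str A,\str B_0)$ so that any $\GammaL$-structure $\str B$ whose closures lie in copies of $\str A$, which maps by a homomorphism-embedding to $\str B_0$, and whose $n$-vertex substructures complete into $\mathcal K$, admits an automorphism-preserving completion $\overbar{\str B}\in\mathcal K$. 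I would take $\overbar{\str B}$ to be the structure produced by Fact~\ref{fact:completion}: the function $F$ guarantees that the $\delta$-edges of $\str B$ form a perfect matching, Fact~\ref{fact:antipodal}(2) supplies the antipodal compatibility of distances, the group element $\sigma$ makes the parity function $f$ of Fact~\ref{fact:completion} intrinsic and readable off $\str B$ globally and consistently, and condition~(3) of local finiteness (with $n$ chosen, as in Example~\ref{ex:locallyfinite}, from the finitely many forbidden $\mathcal B^\delta_K$-cycles and the finitely many distances occurring in $\str B_0$) rules out the cycles forbidden in $\mathcal B^\delta_K$. Conclusion~(3) of Fact~\ref{fact:completion} then says precisely that every automorphism of $\str B$ extends to $\overbar{\str B}$, i.e.\ the completion is automorphism-preserving, so Theorem~\ref{thm:hl} applies and yields EPPA for $\mathcal K$; reading the witness as a metric space gives the desired $\str B\in\Aclass$.

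The main obstacle I anticipate is precisely this automorphism-preserving completion. In the plain metric language an antipodal completion is built by choosing one vertex from each $\delta$-edge, completing the base space of diameter $\delta-1$, and reflecting; this choice is non-canonical and is \emph{not} respected by automorphisms swapping a vertex with its antipode, which is exactly why the shortest-path completion of Example~\ref{ex:locallyfinite} fails here and why Cherlin's antipodal classes lack automorphism-preserving completions in the usual sense. Overcoming this is the whole point of the permutation-group encoding: the parity data $f$ must be made part of the structure, so that \emph{all} generalised automorphisms preserve it, while remaining globally definable and total on every admissible $\str B$, and one must check that the completion of Fact~\ref{fact:completion}, which is canonical only up to the equivalence $a\sim\delta-a$, interacts correctly with $\sigma$. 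Verifying the numerical local-finiteness bound and the consistency of $f$ on intermediate structures $\str B$, which are merely homomorphic preimages of $\str B_0$ rather than genuine metric spaces, is the technical core of the proof.
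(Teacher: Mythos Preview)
Your approach has a genuine gap: the permutation group $\{\mathrm{id},\sigma\}$ is too small to make the parity function $f$ of Fact~\ref{fact:completion} ``intrinsic and readable off $\str B$ globally and consistently.'' Consider $\str B$ consisting of two disjoint edges of length $\delta$, say $\{u,v\}$ and $\{w,x\}$, equipped with the antipode function and no other relations. The map $u\leftrightarrow v$ (fixing $w,x$) together with the \emph{identity} language part is an automorphism of $\str B$: since $\sigma$ fixes $\rel{}{\delta}$, the identity and $\sigma$ act identically on the only relation present, so nothing distinguishes the two language parts on $\str B$. But in any completion $\overbar{\str B}\in\Aclass$ one has $d(u,w)+d(v,w)=\delta$ with $\delta$ odd, hence $d(u,w)\neq d(v,w)$, and the map $u\leftrightarrow v$ with identity language part is \emph{not} an automorphism of $\overbar{\str B}$. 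Thus no completion is automorphism-preserving, and your $\mathcal K$ fails Definition~\ref{defn:locallyfinite}. (This $\str B$ satisfies all three hypotheses of that definition for reasonable $\str A$ and $\str B_0$; it is exactly the obstruction described in the paper's Section~\ref{sec:motivation}.) The involution $\sigma$ \emph{adds} the switching automorphism $u\leftrightarrow v$ with language part $\sigma$, but it does not \emph{remove} the problematic one with trivial language part; on non-edges there is simply no data from which to read $f$.

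The paper's fix is a much richer expansion: each vertex receives a unary mark $U_i^{\chi}$, where $i$ indexes the $\delta$-edges of the fixed structure $\str A$ and $\chi\colon D\to\{0,1\}$ is a valuation function, and $\GammaL$ is generated by permutations of $D$ together with symmetric ``flips'' of the valuations (acting on the unary marks, not on the distance relations). The parity $f(uv)$ is then \emph{defined} by whether $\chi(u)(\pi(v))=\chi(v)(\pi(u))$, which is total on all pairs regardless of whether their distance is known, and Observation~\ref{obs:valuations} shows this is preserved by every $\GammaL$-automorphism. In the two-edge example the antipodes $u,v$ now carry marks $U_i^{\chi}$ and $U_i^{1-\chi}$, so swapping them forces a nontrivial language permutation. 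The price is an extra transfer step: one obtains an EPPA-witness $\str B^+$ for a specific expansion $\str A^+$, takes the reduct $\str B$, and then must verify that every partial isometry $\varphi$ of $\str A$ lifts to a partial $\GammaL$-automorphism $(\alpha_\psi^F,\varphi)$ of $\str A^+$ for a suitably chosen permutation $\psi$ and flip-set $F$. This lifting step is the content of Section~3.4 and has no analogue in your outline.
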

Proposition~\ref{prop:oddnonbip} extends the results of~\cite{eppatwographs} where it was proved for diameter 3.

\subsection{Motivation}\label{sec:motivation}
We first give some motivation and intuition behind Proposition~\ref{prop:oddnonbip}, as its proof is a bit technical. Consider the class $\mathcal A^3_1$. It consists of all finite complete $[3]$-edge-labelled graphs which omit triangles with distances $(1,1,3)$, $(2,2,3)$ and $(3,3,a)$, where $1\leq a\leq 3$. In other words, the edges of length 3 form a matching (and by Fact~\ref{fact:antipodal} we can assume that it is a perfect matching), and if $u,v,w,x$ are pairwise distinct vertices such that $d(u,v)=d(w,x)=3$, then they form an \emph{antipodal quadruple}, which means that $d(u,w)=d(v,x)$, $d(u,x)=d(v,w)$, and either $d(u,w)=1$ and $d(u,x)=2$, or $d(u,w)=2$ and $d(u,x)=1$ (see Figure~\ref{fig:quadruple}).

\begin{figure}[t]
\centering
\includegraphics[width=.8\linewidth]{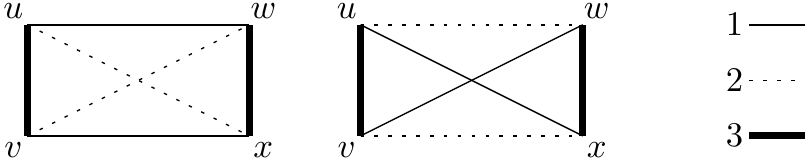} 
\caption{Two possible (isomorphic) antipodal quadruples}
\label{fig:quadruple}
\end{figure}

Suppose that we want to find an EPPA-witness for a single edge of length 3 using Theorem~\ref{thm:hl}. To do it, we in particular need to show that $\mathcal A^3_1$ is a locally finite automorphism-preserving subclass of the class $\mathcal E$ of all $[3]$-edge-labelled graphs, which has  irreducible-structure faithful EPPA by Theorem~\ref{thm:eppanoaxioms}.

However, this does not hold. Take the disjoint union of two edges of length $3$. This clearly has a completion in $\mathcal A^3_1$ (the antipodal quadruple), but it has no automorphism-preserving completion, because one has to pick which edges have length 1 and which edges have length 2.

In order to overcome this issue, we need to expand our structures by some information which will help us decide the parities. At the same time, we have to do it so that there is an expansion $\str A^+$ of $\str A$ such that every partial automorphism of $\str A$ extends to a partial automorphism of $\str A^+$. This allows us to later forget the extra information and get an EPPA-witness for $\str A$.

Let $L^+$ consist of the distance relations $\rel{}{1}$, $\rel{}{2}$ and $\rel{}{3}$, a unary function $M$ and two unary relations, $T$ and $B$ (for \emph{top} and \emph{bottom}), equipped with the permutation group $\Gamma\!_{L^+}$ consisting of the identity and the transposition $(T\; B)$.

Let $\mathcal E$ be the class of all finite $[3]$-edge-labelled graphs where the edges of length $3$ form a perfect matching. Given $\str E\in\mathcal E$, we say that a $\Gamma\!_{L^+}$-structure $\str E^+$ is a \emph{suitable expansion} of $\str E$ if the following hold:
\begin{enumerate}
\item $\str E$ and $\str E^+$ share the same vertices and $\rel{E}{i} = \nbrel{\str E^+}{i}$ for every $1\leq i\leq 3$ (that is, $\str E^+$ and $\str E$ also share the distance relations),
\item $M_{\str E^+}(u) = v$ if and only if $(u,v)\in \nbrel{\str E^+}{3}$ (we need this for the strong amalgamation property), 
\item every vertex of $\str E^+$ is in precisely one of $T_{\str E^+}$ and $B_{\str E^+}$,
\item if $u,v\in E^+$ are connected by an edge of an odd length, then precisely one of $\{u,v\}$ is in $T_{\str E^+}$ and the other is in $B_{\str E^+}$, and
\item if $u,v\in E^+$ are connected by an edge of length 2, then either $\{u,v\}\subseteq T_{\str E^+}$, or $\{u,v\}\subseteq B_{\str E^+}$.
\end{enumerate}
Note that not every $\str E\in \mathcal E$ has a suitable expansion, however, $\str A$ has two of them and both preserve all partial automorphisms of $\str A$ (for this, we need the transposition $(T\; B)$).

Denote by $\mathcal E^+$ the class of all suitable expansions of structures from $\mathcal E$ and similarly define $\mathcal A^{3+}_1$. Theorem~\ref{thm:eppanoaxioms} implies that $\mathcal E^+$ has irreducible-structure faithful EPPA.

In order to prove that $\mathcal A^{3+}_1$ is a locally finite automorphism-preserving subclass of $\mathcal E^+$, it is enough to observe that the conditions of Definition~\ref{defn:locallyfinite} imply that every such $\str B$ which we are asked to complete in fact comes from $\mathcal E^+$, and if we pick $n=6$, we get that it contains no triangles forbidden in $\mathcal A^3_1$. It then suffices to define the missing distances according to the unary relations $T$ and $B$: If $uv$ is not an edge of $\str B$, we put $d(u,v)=2$ if they are in the same unary relation and $d(u,v)=1$ otherwise.

\medskip

In order to prove Proposition~\ref{prop:oddnonbip}, we now generalise the construction above for larger diameters and arbitrary $\str A\in \Aclass$. For the rest of this section, fix $\str A\in \Aclass$. Using Fact~\ref{fact:antipodal}, we can without loss of generality assume that for every vertex $v\in A$ there is a vertex $w\in A$ such that $d_\str{A}(v,w)=\delta$. Enumerate the edges of $\str A$ of length $\delta$ as $e_1,\ldots, e_m$ and let $D = \{1,2,\ldots,m\}$ be their indices, that is, $|D| = \frac{|A|}{2}$ (we will sometimes treat $D$ also as the set $\{e_1,\ldots, e_m\}$ itself using the natural bijection). We furthermore denote $e_i=\{x_i, y_i\}$, where $x_i$ and $y_i$ are vertices of $\str A$.

\subsection{The expanded language}
We will say that a function $\chi\colon D\to \{0,1\}$ is a \emph{valuation function}. For a set $F\subseteq D$, we denote by $\chi^F$ the \emph{flip} of $\chi$, that is, the function $D\to\{0,1\}$ defined as
$$\chi^F(i) = 
\begin{cases}
  1-\chi(i) & \text{ if } i\in F\\
  \chi(i) & \text{ otherwise},
\end{cases}$$
and for a permutation $\psi$ of $D$ we denote by $\chi_\psi$ the function satisfying $\chi_\psi(i) = \chi(\psi^{-1}(i))$. If $\chi$ is a valuation function, $\psi$ is a permutation of $D$ and $F\subseteq D$, then by $\chi_\psi^F$ we will mean $(\chi^F)_\psi$, that is, we first apply the flip and then the permutation.

Let $L$ be the language consisting of binary symmetric irreflexive relations $R^1, \ldots, R^{\delta}$ representing the distances, a unary function $M$, and unary relations $U^\chi_i$ for every $1\leq i\leq m$ and for every valuation function $\chi$. If $\str A$ is an $L$-structure and $v$ is a vertex of $\str A$ such that $v\in U^\chi_i$, we will say that \emph{$v$ has a unary mark $U^\chi_i$}. As in Section~\ref{sec:motivation}, the function $M$ will ensure that the edges of length $\delta$ form a matching, the relations $U^\chi_i$ are generalisations of the relations $T$ and $B$.

Let $F\subseteq D^2$ be such that if $(i,j)\in F$, then also $(j,i)\in F$ ($F$ is \emph{symmetric}). For every $1\leq i\leq m$ we let $F_i\subseteq D$ be the set $\{j\in D:(i,j)\in F\}$. We denote by $\alpha^F$ the permutation of $L$ sending $U_i^\chi\mapsto U_{i}^{\chi^{F_i}}$, which fixes $M$ and $R^1,\ldots,R^{\delta}$ pointwise. In other words, $\alpha^F$ ``flips'' the mutual valuations of pairs from $F$.

For a permutation $\psi$ of $D$, we denote by $\alpha_\psi$ the permutation of $L$ sending $U_i^\chi\mapsto U_{\psi(i)}^{\chi_\psi}$, which fixes $M$ and $R^1,\ldots,R^{\delta}$ pointwise. Now we can define $\GammaL$ as the group generated by
$$\{\alpha^F : F\subseteq D^2\text{ and $F$ is symmetric}\}\cup \{\alpha_\psi:\psi\text{ is a permutation of $D$}\}.$$

\begin{lemma}\label{lem:gammal}
For every member $g\in \GammaL$ there is a permutation $\psi$ of $D$ and a symmetric subset $F\subseteq D^2$ such that $g=\alpha_\psi\alpha^F$.
\end{lemma}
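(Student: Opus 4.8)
The plan is to show that the set $S=\{\alpha_\psi\alpha^F : \psi\in\Sym(D),\ F\subseteq D^2 \text{ symmetric}\}$ is exactly $\GammaL$. Since $D$ is finite, the language $L$ is finite, so $\GammaL$ is a finite group of permutations of $L$. Every generator already lies in $S$: taking $\psi=\mathrm{id}$ gives $\alpha^F=\alpha_{\mathrm{id}}\alpha^F\in S$, and taking $F=\emptyset$ gives $\alpha_\psi=\alpha_\psi\alpha^\emptyset\in S$; likewise the identity equals $\alpha_{\mathrm{id}}\alpha^\emptyset\in S$. Hence it suffices to prove that $S$ is closed under composition: a nonempty subset of a finite group closed under the group operation is a subgroup, so closure yields $\GammaL=\langle\text{generators}\rangle\subseteq S$, while $S\subseteq\GammaL$ is immediate.

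To establish closure I would first record the two ``pure'' multiplication rules, each obtained by evaluating both sides on an arbitrary symbol $U^\chi_i$. Composing two flips amounts to flipping twice, so $\alpha^{F_1}\alpha^{F_2}=\alpha^{F_1\mathbin{\triangle}F_2}$, using $(\chi^A)^B=\chi^{A\mathbin{\triangle}B}$ together with $(F_1)_i\mathbin{\triangle}(F_2)_i=(F_1\mathbin{\triangle}F_2)_i$ and the fact that a symmetric difference of symmetric sets is symmetric; and from $(\chi_{\psi_2})_{\psi_1}=\chi_{\psi_1\psi_2}$ one gets $\alpha_{\psi_1}\alpha_{\psi_2}=\alpha_{\psi_1\psi_2}$. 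The key step is the commutation relation that pushes a flip past a permutation, namely $\alpha^F\alpha_\psi=\alpha_\psi\alpha^{F'}$, where $F'=\{(i,k):(\psi(i),\psi(k))\in F\}$ is the preimage of $F$ under $\psi\times\psi$. This is verified by comparing the exponents produced on $U^\chi_i$ by the two sides: the identity $\chi^F_\psi=(\chi^F)_\psi$ governing the order of flip and permutation does the work, as the left side yields the exponent $(\chi_\psi)^{F_{\psi(i)}}$ while the right side yields $(\chi^{F'_i})_\psi$, and these agree precisely when $F'$ is the stated preimage; note that $F'$ is again symmetric.

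With these rules in hand, closure is formal. For $g_1=\alpha_{\psi_1}\alpha^{F_1}$ and $g_2=\alpha_{\psi_2}\alpha^{F_2}$, I would apply the commutation relation to move $\alpha^{F_1}$ to the right of $\alpha_{\psi_2}$, replacing it by $\alpha^{F_1'}$ with $F_1'$ the preimage of $F_1$ under $\psi_2\times\psi_2$, and then collapse the adjacent permutations and the adjacent flips to obtain $g_1g_2=\alpha_{\psi_1\psi_2}\alpha^{F_1'\mathbin{\triangle}F_2}\in S$. The only real obstacle is the bookkeeping in the commutation relation — keeping straight the convention $\chi^F_\psi=(\chi^F)_\psi$ (flip first, then permute) and transporting the index set $F$ correctly through $\psi$ — but once that single identity is checked, the rest of the argument is purely group-theoretic.
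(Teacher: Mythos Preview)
Your proof is correct and follows essentially the same approach as the paper: both establish the three identities $\alpha^{F_1}\alpha^{F_2}=\alpha^{F_1\triangle F_2}$, $\alpha_{\psi_1}\alpha_{\psi_2}=\alpha_{\psi_1\psi_2}$, and the commutation $\alpha^F\alpha_\psi=\alpha_\psi\alpha^{\psi^{-1}(F)}$, and then use them to collapse any element into the form $\alpha_\psi\alpha^F$. The only cosmetic difference is that the paper rewrites an arbitrary word in the generators directly, while you package the same computation as ``$S$ is closed under multiplication'' and invoke finiteness of $L$ to avoid handling inverses separately.
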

\begin{proof}
Put $$S=\{\alpha^F : F\subseteq D^2\text{ and $F$ is symmetric}\}\cup \{\alpha_\psi:\psi\text{ is a permutation of $D$}\}.$$
We first show three claims:
\begin{claim}\label{cl:gammal:1}
For every $\alpha^F,\alpha^{F'}\in S$ it holds that $\alpha^F\alpha^{F'} = \alpha^{F''}$, where $F''$ is the symmetric difference of $F$ and $F'$ (that is, $(i,j)\in F''$ if and only if it is in exactly one of $F$ and $F'$). Consequently, $\alpha^F\alpha^F = 1$.
\end{claim}
Follows directly from the definitions of $\alpha^F$ and $\chi^F$.
\begin{claim}\label{cl:gammal:2}
For every $\alpha_\psi,\alpha_{\psi'}\in S$ it holds that $\alpha_\psi\alpha_{\psi'} = \alpha_{\psi''}$, where $\psi'' = \psi\psi'$. Consequently, $\alpha_\psi\alpha_{\psi^{-1}}=1$.
\end{claim}
Again follows directly from the definitions of $\alpha_\psi$ and $\chi_\psi$.
\begin{claim}\label{cl:gammal:3}
For every $\alpha_\psi,\alpha^F\in S$ there is $\alpha^{F'}\in S$ such that $\alpha^F\alpha_\psi = \alpha_{\psi}\alpha^{F'}$.
\end{claim}
Put $F'=\psi^{-1}(F)$, that is, $F' = \{(\psi^{-1}(i), \psi^{-1}(j)) : (i,j)\in F\}$. The rest is straightforward verification.

\medskip

We are now ready to prove the statement of this lemma. By definition, every member $g\in \GammaL$ can be written as a word consisting of members of $S$ and their inverses. Using Claims~\ref{cl:gammal:1} and~\ref{cl:gammal:2}, we can replace the inverses by members of $S$, using Claim~\ref{cl:gammal:3} we can ensure that the word can be split into two subwords, first consisting only of $\alpha_\psi$'s and the second consisting of $\alpha^F$'s. From Claims~\ref{cl:gammal:1} and~\ref{cl:gammal:2} it follows that there are $\alpha_\psi,\alpha^F\in S$ such that indeed $g=\alpha_\psi\alpha^F$.
\end{proof}

From now on we will thus denote members of $\GammaL$ by $\alpha_\psi^F$, where 
$$\alpha_\psi^F(U_i^\chi)=\alpha_\psi(\alpha^F(U_i^\chi)) = U_{\psi(i)}^{\chi^{F_i}_\psi},$$
and $\alpha_\psi^F$ is the identity on $\{M,R^1,\ldots,R^{\delta}\}$. In other words, $\alpha_\psi^F$ first ``flips'' the mutual valuations of pairs from $F$ and then permutes the set $D$.

\medskip

% We denote by $\alpha_\psi^F$ the permutation of $L$ sending $U_i^\chi\mapsto U_{\psi(i)}^{\psi(\chi^{F_i})}$ and fixing $M$ and $R^1,\ldots,R^{\delta}$ pointwise. In other words, $\alpha_\psi^F$ first ``flips'' the mutual valuations of pairs from $F$ and then permutes the set $D$. We put
% $$\GammaL = \{\alpha_\psi^F : \psi\text{ is a permutation of $D$}, F\subseteq D^2\text{ and $F$ is symmetric}\}.$$
% One can verify that $\GammaL$ is a group: 

For notational convenience, whenever $\str C$ is a $\GammaL$-structure, $U_i^\xi\in L$ and $u\in C$ is a vertex such that $u\in U_i^{\xi}$ and $u$ has no other unary mark, we will denote by $\pi(u) = i$ its \emph{projection} and by $\chi(u) = \xi$ its \emph{valuation}. If $u$ does not have precisely one unary mark, we leave $\pi(u)$ and $\chi(u)$ undefined.

The following (easy) observation says that the unary marks $U_i^\chi$ indeed generalise the construction from Section~\ref{sec:motivation}.
\begin{observation}\label{obs:valuations}
Let $\str C$ be a $\GammaL$-structure such that every vertex of $\str C$ has precisely one unary mark, let $g$ be an automorphism of $\str C$ and let $u,v\in C$ be arbitrary vertices of $\str C$. Then we have
$$\chi(u)(\pi(v)) = \chi(v)(\pi(u))$$
if and only if
$$\chi(g(u))(\pi(g(v))) = \chi(g(v))(\pi(g(u))).$$

This implies that the function $f\colon {C\choose 2}\to \{0,1\}$, defined by $f(uv)=0$ if $\chi(u)(\pi(v)) = \chi(v)(\pi(u))$ and $f(uv)=1$ otherwise, is invariant under $g$ and consequently under all automorphisms of $\str C$.
\end{observation}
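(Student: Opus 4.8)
The plan is to reduce the statement to the explicit normal form for $\GammaL$ provided by Lemma~\ref{lem:gammal}. Write $g=(g_L,g_C)$ and use Lemma~\ref{lem:gammal} to fix a permutation $\psi$ of $D$ and a symmetric set $F\subseteq D^2$ with $g_L=\alpha_\psi^F$. The first step is to record how $g$ acts on the unary marks. Since an automorphism is in particular an embedding, for every vertex $u$ and every unary mark $U_i^\xi$ we have $u\in U_i^\xi$ if and only if $g_C(u)\in g_L(U_i^\xi)$; as $g_L$ permutes the unary marks and $u$ carries exactly one of them, $g_C(u)$ carries exactly one as well, namely $g_L(U_{\pi(u)}^{\chi(u)})=U_{\psi(\pi(u))}^{\chi(u)^{F_{\pi(u)}}_\psi}$ by the defining formula for $\alpha_\psi^F$. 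Hence $\pi(g(u))=\psi(\pi(u))$ and $\chi(g(u))=(\chi(u)^{F_{\pi(u)}})_\psi$, and symmetrically for $v$.

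The second step is the short computation showing that the permutation part $\psi$ cancels out. Evaluating $\chi(g(u))$ at $\pi(g(v))=\psi(\pi(v))$ and unwinding the defining relation $\zeta_\psi(k)=\zeta(\psi^{-1}(k))$ gives
$$\chi(g(u))(\pi(g(v)))=\chi(u)^{F_{\pi(u)}}(\pi(v)),$$
and likewise $\chi(g(v))(\pi(g(u)))=\chi(v)^{F_{\pi(v)}}(\pi(u))$. Thus, relative to $\chi(u)(\pi(v))$ and $\chi(v)(\pi(u))$, applying $g$ changes each value only by a possible flip dictated by $F$: the left-hand value is flipped exactly when $\pi(v)\in F_{\pi(u)}$, i.e.\ when $(\pi(u),\pi(v))\in F$, and the right-hand value is flipped exactly when $(\pi(v),\pi(u))\in F$.

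The crux is then the symmetry of $F$: since $F$ is symmetric, $(\pi(u),\pi(v))\in F$ if and only if $(\pi(v),\pi(u))\in F$, so either both values get flipped or neither does. In either case the equation $\chi(g(u))(\pi(g(v)))=\chi(g(v))(\pi(g(u)))$ holds precisely when $\chi(u)(\pi(v))=\chi(v)(\pi(u))$ does, which is exactly the asserted equivalence. The final sentence is then immediate: the equivalence says that $f(uv)=0$ if and only if $f(g(u)g(v))=0$, so $f(uv)=f(g(u)g(v))$; as $g$ was an arbitrary automorphism, $f$ is invariant under all of $\Aut(\str C)$.

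The content here is bookkeeping rather than a genuine obstacle: one must keep the order ``flip first, then permute'' in $\chi_\psi^F$ straight and isolate the single place where the symmetry of $F$ is used. The one point I expect to require a word of care is the claim that $g_C(u)$ again carries exactly one unary mark, which uses that an automorphism is an embedding, so that the defining condition for the relations $U_i^\xi$ is a genuine equivalence rather than a one-way implication.
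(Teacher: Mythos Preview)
Your proof is correct and follows essentially the same route as the paper: write $g_L=\alpha_\psi^F$ via Lemma~\ref{lem:gammal}, read off $\pi(g(u))=\psi(\pi(u))$ and $\chi(g(u))=\chi(u)^{F_{\pi(u)}}_\psi$, cancel $\psi$ in the evaluation, and conclude by the symmetry of $F$. Your additional remark that the embedding condition is needed to ensure $g_C(u)$ carries exactly one unary mark is a point the paper leaves implicit.
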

\begin{proof}
Assume that $g = (\alpha_\psi^F, g_C)$ and put $F_u = \{j\in D : (\pi(u),j)\in F\}$ and $F_v = \{j\in D : (\pi(v),j)\in F\}$. Since $g$ is an automorphism, we have
$$g(u)\in \alpha_\psi^F(U^{\chi(u)}_{\pi(u)}),$$
hence $\chi(g(u)) = \chi(u)^{F_u}_\psi$ and $\pi(g(u)) = \psi(\pi(u))$ and similarly $\chi(g(v)) = \chi(v)^{F_v}_\psi$ and $\pi(g(v)) = \psi(\pi(v))$.

It follows that
$$\chi(g(u))(\pi(g(v))) = \chi(u)^{F_u}_\psi(\psi(\pi(v))) = \begin{cases}
	1- \chi(u)(\pi(v)) & \text{if }\pi(v)\in F_u\\
	\chi(u)(\pi(v)) &\text{otherwise},
\end{cases}$$
and similarly for $\chi(g(v))(\pi(g(u)))$. Since $F$ is symmetric, we have that $\pi(v)\in F_u$ if and only if $\pi(u)\in F_v$ and thus the claim follows.
\end{proof}

\subsection{The class $\mathcal K$ and completion to it}
Let $\str C\in \Aclass$. We say that a $\GammaL$-structure $\str C^+$ is a \emph{suitable expansion of $\str C$} if the following hold:
\begin{enumerate}
  \item $\str C$ and $\str C^+$ share the same vertex set,
  \item for every $1\leq i\leq \delta$ we have that $\rel{C}{i} = \nbrel{\str C^+}{i}$,
  \item $M_{\str C^+}(u)=v$ if and only if $d_{\str C^+}(u,v)=\delta$,
  \item every vertex of $\str C^+$ has precisely one unary mark,
  \item if $d_{\str C^+}(u,v)=\delta$ and $u\in U_i^\chi$ in $\str C^+$, then $v\in U_i^{1-\chi}$, where $(1-\chi)(j) = 1-\chi(j)$, and
  \item in $\str C^+$ it holds that $\chi(u)(\pi(v)) \neq \chi(v)(\pi(u))$ if and only if $d_{\str C^+}(u,v)$ is odd.
\end{enumerate}
Denote by $\mathcal K$ the class of all suitable expansions of all $\str C\in \Aclass$ where the edges of length $\delta$ form a perfect matching (Fact~\ref{fact:antipodal} says that this is without loss of generality; one can always uniquely and canonically add vertices so that this condition is satisfied). Note that it is possible that there is no suitable expansion of a given $\str C\in\Aclass$.
%Recall that at the beginning of this chapter, we fixed $\str A\in\Aclass$ and enumerated its edges of length $\delta$ as $e_1=\{x_1,y_1\},\ldots,e_m=\{x_m,y_m\}$. Observe that $\str A$ has at least one suitable expansion: For every $1\leq i\leq m$ we can put $x_i\in U_i^\chi$ and $y_i\in U_i^{1-\chi}$, where $\chi(j)=1$ if and only if $j>i$ and $d_\str{A}(x_i,x_j)$ is odd.

\begin{prop}\label{prop:nonbippluscompletion}
$\mathcal K$ is a locally finite automorphism-preserving subclass of $\mathcal E$, the class of all finite $\GammaL$-structures.
\end{prop}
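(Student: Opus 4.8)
To prove Proposition~\ref{prop:nonbippluscompletion} I would verify the three conditions of Definition~\ref{defn:locallyfinite} for $\mathcal K\subseteq\mathcal E$, where $\mathcal E$ is the class of all finite $\GammaL$-structures. Fix $\str A\in\mathcal K$ and $\str B_0\in\mathcal E$; I need to produce an integer $n$ so that any $\GammaL$-structure $\str B$ satisfying the three hypotheses admits an automorphism-preserving completion in $\mathcal K$. The overall strategy mirrors the motivating $\mathcal A^3_1$ construction in Section~\ref{sec:motivation}: use the hypotheses to force $\str B$ into a shape where the unary marks $U_i^\chi$ and the matching function $M$ behave correctly, then invoke Fact~\ref{fact:completion} to fill in the missing distances in an automorphism-preserving way, with the valuation function $f$ from Observation~\ref{obs:valuations} supplying the parities.

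First I would unpack what the hypotheses of Definition~\ref{defn:locallyfinite} give us. Since every vertex $v\in B$ has its closure $\cl_{\str B}(v)$ inside a copy of $\str A\in\mathcal K$, and every $\str A$ is a suitable expansion, each vertex carries precisely one unary mark $U_i^\chi$, so $\pi$ and $\chi$ are everywhere defined on $B$; moreover $M$ is a matching pairing each vertex with its (unique) $\delta$-neighbour, and the closure condition forces the antipodal law $v\in U_i^\chi\Rightarrow M(v)\in U_i^{1-\chi}$ to hold locally, hence globally. The homomorphism-embedding into $\str B_0$ restricts the distance relations appearing in $\str B$ to a finite set and guarantees (as in Example~\ref{ex:locallyfinite}) that each pair lies in at most one $R^i$ and that all relations are symmetric and irreflexive. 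I would then define $f\colon\binom{B}{2}\to\{0,1\}$ by $f(uv)=0$ iff $\chi(u)(\pi(v))=\chi(v)(\pi(u))$, exactly as in Observation~\ref{obs:valuations}; this $f$ is automatically invariant under every automorphism of $\str B$.

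The core of the argument is then to check that the data $(\str B,f)$ meet the hypotheses of Fact~\ref{fact:completion}. I would choose $n$ large enough (at least as large as the number of vertices in every forbidden cycle of $\mathcal B^\delta_K$, times the bound on distances coming from $\str B_0$, in the spirit of the shortest-path bound in Example~\ref{ex:locallyfinite}) so that the third hypothesis of Definition~\ref{defn:locallyfinite} --- every $\le n$-vertex substructure completes into $\mathcal K$ --- rules out every forbidden cycle of $\mathcal B^\delta_K$ globally in $\str B$, and simultaneously forces the parity compatibility $f(uv)\equiv d_{\str B}(u,v)\bmod 2$ on existing edges and the antipodal coherence (for $d(u,v)=\delta$ either $w$ is non-adjacent to both or $d(u,w)+d(v,w)=\delta$) required by Fact~\ref{fact:completion}. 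The flip/permute description of $\GammaL$ from Lemma~\ref{lem:gammal} guarantees that the two parity conditions on edges of length $\delta$ in Fact~\ref{fact:completion} translate precisely into the invariant behaviour of $f$. Applying Fact~\ref{fact:completion} yields $\overbar{\str A}\in\Aclass$ completing the distances; I would then equip $\overbar{\str A}$ with the unary marks and $M$ inherited from $\str B$ to obtain a $\GammaL$-structure $\str B'$, and verify it is a suitable expansion, i.e. $\str B'\in\mathcal K$. Finally, since $f$ is $\Aut(\str B)$-invariant and Fact~\ref{fact:completion}(3) says every $f$-preserving automorphism of $\str B$ is an automorphism of $\overbar{\str A}$, we get $\Aut(\str B')=\Aut(\str B)$, which by the remark after Definition~\ref{defn:completion} means $\str B'$ is an automorphism-preserving completion.

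I expect the main obstacle to be the bookkeeping that reconciles the \emph{local} information guaranteed by the closure hypothesis with the \emph{global} structural requirements of Fact~\ref{fact:completion} --- in particular checking that the antipodal coherence condition (that every non-$\delta$ edge meeting a $\delta$-edge splits as $d(u,w)+d(v,w)=\delta$) and the parity condition propagate correctly through $M$ and the unary marks, and that the chosen $n$ genuinely suppresses all of the finitely many forbidden cycles of $\mathcal B^\delta_K$ rather than merely the triangles. Verifying that the expansion $\str B'$ is genuinely \emph{suitable} (all six conditions, especially condition~6 relating the parity of $d_{\str B'}$ to $\chi$ and $\pi$) is where the interplay between Fact~\ref{fact:completion}(2) and Observation~\ref{obs:valuations} must be carefully matched; once that is in place, automorphism-preservation follows cleanly from the invariance of $f$.
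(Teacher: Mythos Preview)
Your plan is correct and follows essentially the same route as the paper's proof: derive the local shape of $\str B$ from the hypotheses, define $f$ via Observation~\ref{obs:valuations}, and invoke Fact~\ref{fact:completion}.

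One misattribution to correct: you say the homomorphism-embedding into $\str B_0$ forces each pair of vertices to lie in at most one $R^i$ and forces symmetry and irreflexivity, by analogy with Example~\ref{ex:locallyfinite}. That analogy fails here, because $\mathcal E$ is the class of \emph{all} finite $\GammaL$-structures, so $\str B_0$ is arbitrary and carries no such constraints; in Example~\ref{ex:locallyfinite} the ambient class was already restricted. The paper instead obtains these properties (along with ``exactly one unary mark per vertex'' and the antipodal law on $\delta$-edges) from the third hypothesis, that every substructure on at most $n\geq 4$ vertices has a completion in $\mathcal K$. Your attribution of the unary-mark and $M$-matching properties to the closure hypothesis is fine, and in fact the paper also uses the closure hypothesis for the $M\leftrightarrow R^\delta$ correspondence and the perfect matching; but the metric-graph hygiene on the $R^i$'s must come from the $n$-substructure condition. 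Once you redirect that one step, your argument matches the paper; the paper simply takes $n$ to be at least $4$ and at least twice the vertex count of the largest forbidden cycle in $\mathcal B^\delta_K$ (the ``times the bound on distances'' factor you suggest is not needed, since Fact~\ref{fact:completion} handles the metric completion directly rather than via a shortest-path argument).
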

\begin{proof}
Let $n$ be a large enough integer (say, at least 4 and at least twice the number of vertices of the largest forbidden cycle in $\mathcal B^\delta_K$) and let $\str A\in \mathcal K$ and $\str B$ be as in Definition~\ref{defn:locallyfinite}. Note that there is an unfortunate notational clash, this $\str A$ is different from the structure $\str A$ which we fixed at the beginning of this section.

The fact that for every $v\in B$ one has that $\cl_{\str B}(v)$ lies in a copy of $\str A$ implies that $M_{\str B}(u)=v$ if and only if $d_{\str B}(u,v)=\delta$ and furthermore the edges of length $\delta$ form a perfect matching in $\str B$ (because this holds in $\str A$).

The fact that every substructure of $\str B$ on at most $n$ vertices has a completion in $\mathcal K$ (which is promised by Definition~\ref{defn:locallyfinite}) implies the following:
\begin{enumerate}
\item Every pair of vertices is in at most one distance relation $R^i$ and these relations are symmetric and irreflexive,
\item every vertex of $\str B$ is in precisely one unary relation, and
\item if $d_{\str B}(u,v)=\delta$ then $v\in U_{\pi(u)}^{1-\chi(u)}$.
\end{enumerate}

We can assume that if $d_{\str B}(u,v)=\delta$ and $w\neq u,v$ is a vertex of $\str B$ such that at least one of $d_{\str B}(u,w)$, $d_{\str B}(v,w)$ is defined, then in fact both distances are defined and furthermore $d_{\str B}(u,w) + d_{\str B}(v,w)=\delta$, because there is a unique way to complete it. It also follows that whenever $u,v$ are vertices such that their distance is defined, then $\chi(u)(\pi(v)) \neq \chi(v)(\pi(u))$ if and only if $d_{\str B}(u,v)$ is odd.

Finally, from the definition of $n$ it also follows that $\str B$ contains no cycles forbidden in $\mathcal B^{\delta}_{K}$ (we needed $n$ to be twice the number of vertices because Definition~\ref{defn:locallyfinite} talks about substructures and these need to be closed for functions). Hence if we define the function $f\colon {B\choose 2}\to \{0,1\}$ as $f(uv)=0$ if $\chi(u)(\pi(v)) = \chi(v)(\pi(u))$ and $f(uv)=1$ otherwise, Fact~\ref{fact:completion} gives us an automorphism-preserving way to add the remaining non-$\delta$ distances, which is exactly what we need for a completion to $\mathcal K$.
\end{proof}
Let us remark that $\mathcal K$ is hereditary: Whenever $\str B$ is a substructure of $\str C\in \Aclass$ such that the edges of length $\delta$ form a perfect matching in both $\str B$ and $\str C$, we have that if $\str C^+$ is a suitable expansion of $\str C$, then the substructure of $\str C^+$ induced on the vertex set $B$ is a suitable expansion of $\str B$. % Moreover, from Fact~\ref{fact:completion} it follows that $\mathcal K$ is a strong amalgamation class (one can prove that similarly as Proposition~\ref{prop:nonbippluscompletion}).

\subsection{Constructing the witness}
Recall that at the beginning of this section, we fixed $\str A\in\Aclass$ and enumerated its edges of length $\delta$ as $e_1=\{x_1,y_1\},\ldots,e_m=\{x_m,y_m\}$.

For $1\leq i\leq m$, we define $\chi_i\colon D\to \{0,1\}$ by putting
$$\chi_i(j) = 
\begin{cases}
  1 & \text{ if } i > j \text{ and } d_{\str A}(x_i, x_j) \text{ is odd},\\
  0 & \text{ otherwise}.
\end{cases}$$

We define a suitable expansion $\str A^+\in \mathcal K$ of $\str A$ by putting, for every $1\leq i\leq m$, $M_{\str A^+}(x_i)=y_i$, $M_{\str A^+}(y_i)=x_i$, $x_i\in U_i^{\chi_i}$ and $y_i\in U_i^{1-\chi_i}$. Next we use Theorems~\ref{thm:hl} and~\ref{thm:eppanoaxioms} with Proposition~\ref{prop:nonbippluscompletion} to get $\str B^+\in \mathcal K$ which is an EPPA-witness for $\str A^+$ (so, in particular, $\str A^+\subseteq \str B^+$). Finally, we put $\str B$ to be the \emph{reduct} of $\str B^+$ forgetting all the unary marks and the function $M$. Then indeed, $\str B\in \Aclass$. And since $\str A^+\subseteq \str B^+$, we also have $\str A\subseteq \str B$.

\subsection{Extending partial automorphisms}
We will show that $\str B$ extends all partial automorphisms of $\str A$. Fix a partial automorphism $\varphi$ of $\str A$. Without loss of generality we can assume that whenever $d_\str{A}(u,v)=\delta$ and $u\in \dom(\varphi)$, then also $v\in\dom(\varphi)$ (because there is a unique way of extending $\varphi$ to $v$). Let $\psi\colon D\to D$ be an arbitrary permutation of $D$ extending the action of $\varphi$ on the edges of length $\delta$ of $\str A$.

We now define a set $F\subseteq D^2$ of \emph{flipping pairs}. We put $(i,j)$ and $(j,i)$ in $F$ if $x_i\in\dom(\varphi)$ and $\chi(\varphi(x_i))(\psi(j)) \neq \chi(x_i)(j)$ in $\str A^+$. Note that if both $x_i$ and $x_j$ are in the domain of $\varphi$ then the outcome is the same if we consider $x_j$ instead of $x_i$, because $\varphi$ is an automorphism and therefore preserves the parity of $d_\str{A}(x_i, x_j)$ and thus also the (non)-equality of the corresponding valuations. Note also that if we considered $y_i$ instead of $x_i$, the outcome would still be the same.

What remains is to verify that the pair $(\alpha_\psi^F, \varphi)$ is a partial ($\GammaL$-)automorphism of $\str A^+$. Indeed, assuming that it is the case, we get that it extends to an automorphism $(\theta_L,\theta)$ of $\str B^+$, where $\theta_L = \alpha_\psi^F$ and $\varphi\subseteq \theta$. But this means that $\theta$ is an automorphism of $\str B$ extending $\varphi$ and hence $\str B$ is an EPPA-witness for $\str A$. In the rest of this section we verify that $(\alpha_\psi^F, \varphi)$ is a partial automorphism of $\str A^+$. It amounts to (technical) checking that our construction does what it is supposed to do.

From the fact that $\varphi$ is a partial automorphism of $\str A$ we get that $d_\str{B^+}(u,v)=d_\str{B^+}(\varphi(u),\varphi(v))$ whenever $u,v\in \dom(\varphi)$. This, together with the assumption that whenever $d_\str{A}(u,v)=\delta$ and $u\in \dom(\varphi)$, then also $v\in\dom(\varphi)$, implies that if $v\in \dom(\varphi)$ then $M_\str{B^+}(\varphi(v)) = \varphi(M_\str{B^+}(v))$, or in other words, $\varphi$ respects the function $M$.

It remains to verify that for every $v\in\dom(\varphi)$ and for every $U^\chi_i$ we have $v\in U^\chi_i$ if and only if $\varphi(v)\in \alpha_\psi^F(U^\chi_i)$, or in other words, $\pi(\varphi(v)) = \psi(\pi(v))$ and $\chi(\varphi(v)) = \chi(v)^{F_v}_\psi$, where $F_v = \{j\in D : (\pi(v),j)\in F\}$. Since $\psi$ extends the action of $\varphi$ on the edges of length $\delta$, and since for every $v\in A$ it holds that $\pi(v) = i$ if and only if $v\in e_i$, it follows that for every $v\in \dom(\varphi)$ we have $\pi(\varphi(v)) = \psi(\pi(v))$.

Analogously, from the definition of $F$ we have that $(i,j)$ and $(j,i)$ are in $F$ if and only if $x_i\in\dom(\varphi)$ and $\chi(\varphi(x_i))(\psi(j)) \neq \chi(x_i)(j)$. From the construction it follows that this happens if and only if $y_i\in\dom(\varphi)$ and $\chi(\varphi(y_i))(\psi(j)) \neq \chi(y_i)(j)$. We can summarize these two equivalences as follows: For every $v\in\dom(\varphi)$ and for every $j\in D$ we have $(\pi(v),j)\in F$ if and only if $\chi(\varphi(v))(\psi(j)) \neq \chi(v)(j)$.

By the definition of $\alpha^F$, for every $v\in\dom(\varphi)$ and for every $j\in D$ we have that $\chi(v)^F(j) \neq \chi(v)(j)$ if and only if $(\pi(v),j)\in F$. Consequently, $\chi(v)^F_\psi(\psi(j)) \neq \chi(v)(j)$ if and only if $(\pi(v),j)\in F$, which happens if and only if $\chi(\varphi(v))(\psi(j))\neq \chi(v)(j)$. It follows that $\chi(v)^F_\psi = \chi(\varphi(v))$ which concludes the proof of Proposition~\ref{prop:oddnonbip}.

\subsection{Remarks}
\begin{enumerate}
 \item If we extended the action of $\varphi$ on the edges of length $\delta$ coherently (say, in an order-preserving way), we would get \emph{coherent EPPA} (see~\cite{Siniora}) as in~\cite{eppatwographs}.
 \item The same strategy would also work for proving EPPA for antipodal metric spaces of even diameter, we would only need to pick a subset $O\subset\{0,1,\ldots,\delta\}$ such that $\delta\in O$ and precisely one of $a, \delta-a$ is in $O$ for every $a\in \{0,1,\ldots,\delta\}$ and replace each occurrence of ``odd distance'' by ``distance from $O$'' and ``even distance'' by ``distance $a$ such that $\delta-a\in O$''. (Note that for even $\delta$, we have $\frac{\delta}{2}=\delta-\frac{\delta}{2}$, so $\frac{\delta}{2}$ ``is both odd and even'' in this sense.)
 \item Cherlin also allows to forbid certain sets of $\{1,\delta-1\}$-valued metric spaces (he calls them \emph{Henson constraints}). We chose not to include these classes in order to avoid further technical complications, but using irreducible-structure faithfulness and the fact that the completion from Fact~\ref{fact:completion} does not create distances $1$ and $\delta-1$ gives EPPA also in this case.
\end{enumerate}

\section{The even diameter bipartite case}\label{sec:bip}
The odd diameter bipartite case was done in~\cite{Aranda2017} (because every edge of length $\delta$ has one endpoint in each part of the bipartition and thus there is a unique way of determining parities of the distances, which implies that such classes admit automorphism-preserving completions), so it suffices to deal with the even diameter case. We prove the following proposition.
\begin{prop}\label{prop:bip}
Let $\Aclass$ be a bipartite class of antipodal metric spaces of even diameter. Then for every $\str A\in \Aclass$ there is $\str B\in \Aclass$ which is an EPPA-witness of $\str A$.
\end{prop}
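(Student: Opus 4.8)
The plan is to mirror the structure of the odd diameter non-bipartite case from Section~\ref{sec:nonbip}, adapting it to the bipartite even diameter setting via the substitution prescribed in Remark~2 of Section~\ref{sec:nonbip}. First I would fix $\str A\in \Aclass$ and, using Fact~\ref{fact:antipodal}, assume without loss of generality that the edges of length $\delta$ form a perfect matching, enumerating them as $e_1=\{x_1,y_1\},\ldots,e_m=\{x_m,y_m\}$ with $D=\{1,\ldots,m\}$. The key conceptual move is to fix a set $O\subseteq\{0,1,\ldots,\delta\}$ with $\delta\in O$ and containing exactly one of $\{a,\delta-a\}$ for each $a$; since here $\delta$ is even and $K=\delta$, I would take $O$ to be the odd distances together with $\delta$ (noting that $\frac{\delta}{2}$ is self-paired and may be placed in $O$). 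The valuation functions $\chi\colon D\to\{0,1\}$, the flips $\chi^F$, the permuted versions $\chi_\psi$, the language $L$ with relations $R^1,\ldots,R^\delta$, the function $M$, and the unary marks $U_i^\chi$, together with the group $\GammaL$ generated by the $\alpha^F$ and $\alpha_\psi$, all transfer verbatim; Lemma~\ref{lem:gammal} and Observation~\ref{obs:valuations} hold unchanged since their proofs are purely combinatorial and do not use the parity interpretation.

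Next I would define the class $\mathcal K$ of suitable expansions exactly as in Section~\ref{sec:nonbip}, except that condition~(6) reads ``$\chi(u)(\pi(v))\neq\chi(v)(\pi(u))$ if and only if $d_{\str C^+}(u,v)\in O$'', and I would establish the analogue of Proposition~\ref{prop:nonbippluscompletion}: that $\mathcal K$ is a locally finite automorphism-preserving subclass of the class $\mathcal E$ of all finite $\GammaL$-structures. The main technical obstacle is the completion step, i.e.\ supplying the bipartite even diameter analogue of Fact~\ref{fact:completion} that guarantees an automorphism-preserving completion respecting the $O$-parities encoded by $f$. The clean way to obtain this is again via Fact~\ref{fact:antipodal}(3): pick one vertex from each edge of length $\delta$, completing the resulting auxiliary non-antipodal metric space of diameter $\delta-1$ using the completion results of~\cite{Aranda2017} for the class $\mathcal B^\delta_K$, and then pull the completion back using $f$ to decide which of $a$ or $\delta-a$ each completed edge receives. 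I expect this to work because the completion procedure for $\mathcal A^{\delta-1}_{K_1,K_2,C_0,C_1}$ preserves the equivalence $a\sim\delta-a$, exactly as exploited in Section~\ref{sec:methom}; the bipartiteness ($K=\delta$) should only simplify matters by removing the odd-perimeter short-triangle constraints.

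With the completion in hand, local finiteness follows by the same argument as Proposition~\ref{prop:nonbippluscompletion}: choosing $n$ at least $4$ and at least twice the number of vertices of the largest forbidden cycle in $\mathcal B^\delta_K$ forces every structure $\str B$ we must complete to genuinely lie in $\mathcal E$ with $M$ encoding the $\delta$-matching, every vertex carrying exactly one unary mark, the parity consistency holding, and no forbidden cycle appearing, so the pulled-back completion lands in $\mathcal K$ and is automorphism-preserving by canonicity. Heredity of $\mathcal K$ is verified as in the remark following Proposition~\ref{prop:nonbippluscompletion}.

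Finally I would construct the witness identically: define $\chi_i(j)=1$ precisely when $i>j$ and $d_\str{A}(x_i,x_j)\in O$ (and $0$ otherwise), build the suitable expansion $\str A^+\in\mathcal K$ with $x_i\in U_i^{\chi_i}$, $y_i\in U_i^{1-\chi_i}$ and $M$ matching $x_i\leftrightarrow y_i$, and apply Theorems~\ref{thm:hl} and~\ref{thm:eppanoaxioms} together with the locally-finiteness proposition to obtain an EPPA-witness $\str B^+\in\mathcal K$ for $\str A^+$. Taking $\str B$ to be the reduct of $\str B^+$ forgetting the unary marks and $M$ yields $\str B\in\Aclass$ with $\str A\subseteq\str B$. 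To extend an arbitrary partial automorphism $\varphi$ of $\str A$, I would define $\psi$ extending its action on the $\delta$-edges and the flipping set $F$ by the same rule ($(i,j),(j,i)\in F$ iff $x_i\in\dom(\varphi)$ and $\chi(\varphi(x_i))(\psi(j))\neq\chi(x_i)(j)$), and verify that $(\alpha_\psi^F,\varphi)$ is a partial automorphism of $\str A^+$; this computation is line-for-line the one concluding Section~\ref{sec:nonbip}, with ``odd'' replaced by ``in $O$'' throughout, and relies only on $\varphi$ preserving distances and hence preserving $O$-membership of distances. The single genuinely new ingredient is thus the $O$-parity completion lemma; everything else is a faithful translation of the odd non-bipartite argument.
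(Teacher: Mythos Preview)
Your translation misses the one genuinely new obstacle in the bipartite even-diameter case: the family of forbidden cycles is \emph{infinite}. In a bipartite class every odd-perimeter cycle is forbidden, so there is no ``largest forbidden cycle in $\mathcal B^\delta_K$'' and no finite $n$ can rule them all out. Consequently your local-finiteness argument breaks: the $\str B$ handed to you by Definition~\ref{defn:locallyfinite} may contain arbitrarily long odd-perimeter cycles, and then the completion lemma (your $O$-parity analogue of Fact~\ref{fact:completion}) does not apply. Your remark that bipartiteness ``should only simplify matters'' is exactly backwards here.

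The paper's fix is to encode the bipartition into the unary marks. Since $\delta$ is even, each $\delta$-edge lies inside one part of the bipartition of $\str A$, so $D$ splits as $D_1\cup D_2$; one assumes $|D_1|=|D_2|$ (enlarging $\str A$ if necessary), restricts $\GammaL$ to permutations $\psi$ that are \emph{partition-preserving} (sending $D_1$ to $D_1$ or to $D_2$), and adds to the definition of suitable expansion the requirement that the sets $P_i=\{v:\pi(v)\in D_i\}$ coincide with the two parts of the bipartition of $\str C$. This extra axiom forces any $\str B$ arising in the local-finiteness argument to be bipartite structurally, eliminating odd-perimeter cycles for free; only the finitely many even-perimeter forbidden cycles remain, and those a finite $n$ handles. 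When extending a partial automorphism $\varphi$, the permutation $\psi$ must now be chosen partition-preserving, which is possible precisely because $\varphi$ respects the bipartition and $|D_1|=|D_2|$. Apart from this, your outline is correct and matches the paper.
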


The structure of the proof will be very similar to the odd non-bipartite case. We will also introduce some facts from~\cite{Aranda2017} about completions, add unary functions and unary marks which will help us decide how to fill-in the missing distances while preserving all necessary automorphisms. We have to be a bit more careful in dealing with the bipartiteness (edges of length $\delta$ now lie inside the parts, so we need to make $\psi$ preserve the bipartition, there are also infinitely many forbidden cycles --- the odd perimeter ones), but the general structure is identical.

For the rest of the section, fix $\str A\in \Aclass$. We can without loss of generality assume that every vertex $v\in A$ has some vertex $w\in A$ such that $d_\str{A}(v,w)=\delta$. Consider the set $\{e_1,\ldots, e_m\}$ of edges of $\str A$ of length $\delta$ and let $D = \{1,2,\ldots,m\}$ be their indices, that is, $|D| = \frac{|A|}{2}$. We denote $e_i=\{x_i, y_i\}$, where $x_i$ and $y_i$ are vertices of $\str A$. Since $\str A$ is bipartite, we have that the relation ``vertices $u$ and $v$ are at an even distance'' is an equivalence relation on $\str A$ which has two equivalence classes. Because $\delta$ is even, we can assume that $D = D_1\cup D_2$, where $D_1$ consists of the indices of edges with both endpoints in one part and $D_2$ consists of the indices of edges with both endpoints in the other part.

We also assume without loss of generality that $|D_1| = |D_2|$ (otherwise we can add more vertices to $\str A$, and if this larger structure has an EPPA-witness $\str B$, then it is also an EPPA-witness of the original $\str A$).

We will need the following analogue of Fact~\ref{fact:completion}. 
\begin{fact}\label{fact:bipcompletion}
Let $\Aclass$ be a bipartite class of antipodal metric spaces. Let $\str A$ be a $[\delta]$-edge-labelled graph such that the edges of length $\delta$ of $\str A$ form a perfect matching and furthermore for every $u,v,w\in A$ such that $d_\str{A}(u,v)=\delta$ and $w\neq u,v$, either $w$ is not connected by an edge to either of $u,v$, or $d_\str{A}(u,w)+d_\str{A}(v,w)=\delta$. Suppose furthermore that $\str A$ contains no odd-perimeter cycles and none of the finitely many even-perimeter cycles forbidden in $\mathcal B^{\delta}_{K}$.

Let $O\subset \{0,1,\ldots,\delta\}$ be a set such that $\delta\in O$ and exactly one of $a,\delta-a$ is in $O$ for every $a\in\{0,1,\ldots,\delta\}$ and denote by $\delta-O$ the set $\{\delta - a : a\in O\}$.

Let $f\colon {A\choose 2}\to \{0,1\}$ be a mapping satisfying the following.
\begin{enumerate}
  \item Whenever $uv$ is an edge of $\str A$, then $f(uv) = 1$ implies that $d_\str{A}(u,v)\in O$ and $f(uv)=0$ implies that $d_\str{A}(u,v)\in \delta-O$.\footnote{This seemingly sloppy statement is necessary in order to deal with $\frac{\delta}{2}$ being in both $O$ and $\delta-O$ for even $\delta$.}
  \item Let $u_1v_1$ and $u_2v_2$ be two different edges of length $\delta$ of $\str A$. Then $f(u_1u_2)=f(v_1v_2)$, $f(u_1v_2)=f(u_2v_1)$ and $f(u_1u_2)\neq f(u_1v_2)$.
\end{enumerate}

Then there is $\overbar{\str A}\in \Aclass$ such that the following holds.
\begin{enumerate}
  \item $\overbar{\str A}$ is a completion of $\str A$ with the same vertex set,
  \item for every edge $uv$ of $\overbar{\str A}$ it holds that $f(uv) = 1$ implies that $d_{\overbar{\str{A}}}(u,v)\in O$ and $f(uv)=0$ implies that $d_{\overbar{\str{A}}}(u,v)\in \delta-O$, and
  \item Every automorphism of $\str A$ which preserves values of $f$ is also an automorphism of $\overbar{\str A}$.
\end{enumerate}
\end{fact}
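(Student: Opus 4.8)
The plan is to mimic the construction sketched after Fact~\ref{fact:completion}, replacing the parity bookkeeping by the $O$ versus $\delta-O$ dichotomy and taking extra care of bipartiteness. First I would pick one vertex from each edge of length $\delta$, say $x_i$ from $e_i=\{x_i,y_i\}$, forming the representative set $S=\{x_1,\ldots,x_m\}$. Since the edges of length $\delta$ form a matching, each vertex has a unique antipode, so no two distinct representatives are at distance $\delta$; hence $\str A[S]$ is a (possibly incomplete) $[\delta-1]$-edge-labelled graph. Because $\str A$ is bipartite and both endpoints of every $\delta$-edge lie in the same part, the bipartition of $\str A$ restricts to one of $S$, and the hypotheses guarantee that $\str A[S]$ contains no odd-perimeter cycle and none of the finitely many even-perimeter cycles forbidden in $\mathcal B^\delta_K$.

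Next I would complete $\str A[S]$ to a full metric space in $\mathcal B^\delta_K$ using the completion procedure of~\cite{Aranda2017} (Theorem~4.9, cf.~Lemma~4.18), which in the bipartite setting respects the bipartition and thus creates no odd cycles. As in the proof of Fact~\ref{fact:completion}, the crucial feature is that this procedure preserves the equivalence $a\sim\delta-a$: flipping the labels of some edges from $a$ to $\delta-a$ before completing flips the completed graph correspondingly. I would then pull the completion back to a full antipodal space $\overbar{\str A}$ on the vertex set of $\str A$: for representatives $x_i,x_j$ the value $f(x_ix_j)$ selects, among the pair $\{a,\delta-a\}$, whether the edge $x_ix_j$ receives the label in $O$ or in $\delta-O$, and the remaining distances $d(x_i,y_j),d(y_i,x_j),d(y_i,y_j)$ are forced by antipodality (Fact~\ref{fact:antipodal}, point~\ref{pt2}), their $f$-values being supplied consistently by condition~2 on $f$.

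The role of condition~2 is precisely to make this pull-back independent of the arbitrary choice of representatives: swapping $x_i\leftrightarrow y_i$ replaces every incident label $a$ by $\delta-a$ and, via the relations $f(u_1u_2)=f(v_1v_2)$, $f(u_1v_2)=f(u_2v_1)$, $f(u_1u_2)\neq f(u_1v_2)$, replaces the relevant $f$-values by their complements; the $a\sim\delta-a$-invariance of the completion then makes both choices yield the same $\overbar{\str A}$. Membership $\overbar{\str A}\in\Aclass$ reduces to checking antipodality and the omission of forbidden triangles, which follows from the representative completion lying in $\mathcal B^\delta_K$ together with antipodality. For the automorphism statement, any automorphism of $\str A$ preserving $f$ permutes the $\delta$-edges and hence acts on the representatives up to the $x\leftrightarrow y$ flip; it induces a label-preserving map of the representative space, which by canonicity of the completion extends to the completion and, after pulling back, to $\overbar{\str A}$.

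The main obstacle I anticipate is the bipartite bookkeeping. Unlike the odd-diameter case, here $\delta$ is even, so $\tfrac{\delta}{2}$ is fixed by $a\mapsto\delta-a$ and lies in both $O$ and $\delta-O$, which is exactly why condition~1 is phrased as implications rather than an equivalence; I would need to check that edges of length $\tfrac{\delta}{2}$ introduce no inconsistency in the pull-back. At the same time I must verify that the completion procedure for $\mathcal B^\delta_K$, where infinitely many odd-perimeter cycles are forbidden, still enjoys both the $a\sim\delta-a$-invariance and the canonicity on which the entire argument rests.
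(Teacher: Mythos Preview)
Your proposal is correct and follows essentially the same approach as the paper: the paper states Fact~\ref{fact:bipcompletion} without proof, introducing it merely as ``the following analogue of Fact~\ref{fact:completion}'', so the intended argument is precisely the one you outline --- choose representatives from each $\delta$-edge, complete in $\mathcal B^\delta_K$ via the procedure of~\cite{Aranda2017}, and pull back using $f$ and the $a\sim\delta-a$ invariance, with the extra bipartite bookkeeping you identify. Your discussion of the $\tfrac{\delta}{2}$ subtlety and the independence of the choice of representatives is exactly the content the paper leaves implicit.
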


\subsection{The expanded language}
As in the odd non-bipartite case, we will call a function $\chi\colon D\to \{0,1\}$ a \emph{valuation function}, adopt the same notions of \emph{flips} $\chi^F$ and permutations $\chi_\psi$. We also let $L$ be the same language as in Section~\ref{sec:nonbip}, adding a unary function $M$ and unary relations $U^\chi_i$.

In contrast to Section~\ref{sec:nonbip}, we put $\GammaL$ to be the group generated by
$$S=\{\alpha^F : F\subseteq D^2\text{ and $F$ is symmetric}\}\cup \{\alpha_\psi:\psi\text{ is a partition-preserving permutation of $D$}\},$$
where $\psi$ is \emph{partition-preserving} if either $\psi(D_1)=D_1$ and $\psi(D_2)=D_2$, or $\psi(D_1)=D_2$ and $\psi(D_2)=D_1$. Analogously to Lemma~\ref{lem:gammal} it follows that every element of $\GammaL$ can be written as the product $\alpha_\psi\alpha^F$, where $\alpha_\psi,\alpha^F\in S$. We will denote $\alpha_\psi^F = \alpha_\psi\alpha^F$.

Again, for a vertex $u$ in a $\GammaL$-structure which has precisely one unary mark $U_i^{\xi}$, we define $\pi(u)=i$ and $\chi(u)=\xi$ and we have the same observation with the same proof as before.
\begin{observation}\label{obs:bipvaluations}
Let $\str C$ be a $\GammaL$-structure such that every vertex of $\str C$ has precisely one unary mark, let $g$ be an automorphism of $\str C$ and let $u,v\in C$ be arbitrary vertices of $\str C$. Then we have
$$\chi(u)(\pi(v)) = \chi(v)(\pi(u))$$
if and only if
$$\chi(g(u))(\pi(g(v))) = \chi(g(v))(\pi(g(u))).$$

This implies that the function $f\colon {C\choose 2}\to \{0,1\}$, defined by $f(uv)=0$ if $\chi(u)(\pi(v)) = \chi(v)(\pi(u))$ and $f(uv)=1$ otherwise, is invariant under $g$ and consequently under all automorphisms of $\str C$.
\end{observation}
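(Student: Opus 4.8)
The plan is to prove this by direct computation, exactly mirroring the proof of Observation~\ref{obs:valuations} from the non-bipartite case. The statement to establish is that for any automorphism $g$ of $\str C$ and any two vertices $u,v$, the equality $\chi(u)(\pi(v)) = \chi(v)(\pi(u))$ is preserved by $g$. By Lemma~\ref{lem:gammal} (in its bipartite analogue, stated in the paragraph just before this observation) we may write $g = (\alpha_\psi^F, g_C)$ for some partition-preserving permutation $\psi$ of $D$ and some symmetric $F\subseteq D^2$. First I would record how $g$ acts on the unary marks: since $g$ is an automorphism, the fact that $u\in U_{\pi(u)}^{\chi(u)}$ forces $g(u)\in \alpha_\psi^F(U_{\pi(u)}^{\chi(u)}) = U_{\psi(\pi(u))}^{\chi(u)^{F_u}_\psi}$, whence $\pi(g(u)) = \psi(\pi(u))$ and $\chi(g(u)) = \chi(u)^{F_u}_\psi$, where $F_u = \{j\in D : (\pi(u),j)\in F\}$, and symmetrically for $v$.

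Next I would unwind the composite flip-and-permute action on the relevant value. Evaluating $\chi(g(u))$ at $\pi(g(v)) = \psi(\pi(v))$, the permutation $\psi$ cancels with its inverse inside $\chi_\psi$, leaving
$$\chi(g(u))(\pi(g(v))) = \chi(u)^{F_u}_\psi(\psi(\pi(v))) = \begin{cases} 1 - \chi(u)(\pi(v)) & \text{if } \pi(v)\in F_u,\\ \chi(u)(\pi(v)) & \text{otherwise},\end{cases}$$
and the entirely symmetric computation gives $\chi(g(v))(\pi(g(u)))$ in terms of whether $\pi(u)\in F_v$. The key point is that $F$ is \emph{symmetric}, so $\pi(v)\in F_u$ holds precisely when $(\pi(u),\pi(v))\in F$, which holds precisely when $\pi(u)\in F_v$. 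Thus both sides are flipped simultaneously or neither is, and the equivalence $\chi(u)(\pi(v)) = \chi(v)(\pi(u)) \iff \chi(g(u))(\pi(g(v))) = \chi(g(v))(\pi(g(u)))$ follows immediately, since a single common flip of both values preserves their equality or inequality.

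The second assertion, that the induced function $f$ is automorphism-invariant, is then an immediate consequence: by definition $f(uv)$ records exactly whether $\chi(u)(\pi(v)) = \chi(v)(\pi(u))$, and the first part says this is preserved, so $f(g(u)g(v)) = f(uv)$ for every automorphism $g$. Since the argument is word-for-word identical to the non-bipartite case (the only structural difference between the two settings is that $\GammaL$ is now generated using partition-preserving permutations rather than arbitrary ones, and this restriction never enters the computation), I do not anticipate any genuine obstacle; the restriction on $\psi$ is simply irrelevant here because the only facts used about $g$ are its $\alpha_\psi^F$ form and the symmetry of $F$. The one point worth stating carefully is why the cancellation $\chi_\psi(\psi(\pi(v))) = \chi(\psi^{-1}(\psi(\pi(v)))) = \chi(\pi(v))$ is legitimate, which is immediate from the definition $\chi_\psi(i) = \chi(\psi^{-1}(i))$.
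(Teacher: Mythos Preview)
Your proposal is correct and follows exactly the same argument as the paper, which for this observation simply says ``we have the same observation with the same proof as before'' and refers back to the proof of Observation~\ref{obs:valuations}. Your additional remark that the partition-preserving restriction on $\psi$ plays no role in the computation is accurate and worth noting.
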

Note that the edge-labelled graph formed by the distance relations in $\str C$ may contain odd cycles.

\subsection{The class $\mathcal K$ and completion to it}
Now we also have to ensure that structures from $\mathcal K$ are bipartite. Let $\str C\in \Aclass$. Since $\str C$ is bipartite, we can denote by $Q_1,Q_2$ its parts (that is, $Q_1\cup Q_2 = C$ and each of $Q_1,Q_2$ is an equivalence class of the relation ``vertices $u$ and $v$ are at an even distance from each other''). We say that a $\GammaL$-structure $\str C^+$ is a \emph{suitable expansion of $\str C$} if the following hold:
\begin{enumerate}
  \item $\str C$ and $\str C^+$ share the same vertex set,
  \item for every $1\leq i\leq \delta$ we have that $\rel{C}{i} = \nbrel{\str C^+}{i}$,
  \item $M_{\str C^+}(u)=v$ if and only if $d_{\str C^+}(u,v)=\delta$,
  \item every vertex of $\str C^+$ has precisely one unary mark,
  \item if $d_{\str C^+}(u,v)=\delta$ and $u\in U_i^\chi$ in $\str C^+$, then $v\in U_i^{1-\chi}$,
  \item in $\str C^+$ it holds that if $\chi(u)(\pi(v)) \neq \chi(v)(\pi(u))$ then $d_{\str C^+}(u,v) \in O$ and if $\chi(u)(\pi(v)) = \chi(v)(\pi(u))$ then $d_{\str C^+}(u,v) \in \delta-O$, and
  \item let $P_1=\{v\in C : \pi(v)\in D_1\}$ and $P_2=\{v\in C : \pi(v)\in D_2\}$ (where $\pi$ is taken with respect to $\str C^+$). Then either $P_1=Q_1$ and $P_2=Q_2$, or $P_1=Q_2$ and $P_2=Q_1$.
\end{enumerate}
Denote by $\mathcal K$ the class of all suitable expansions of all $\str C\in \Aclass$ where the edges of length $\delta$ form a perfect matching.

\begin{prop}\label{prop:bippluscompletion}
$\mathcal K$ is a locally finite automorphism-preserving subclass of $\mathcal E$, the class of all finite $\GammaL$-structures.
\end{prop}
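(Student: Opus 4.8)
The plan is to mirror the proof of Proposition~\ref{prop:nonbippluscompletion} from the odd non-bipartite case, with the extra bookkeeping needed to respect the bipartition and the infinitely many forbidden odd cycles. First I would fix a large enough integer $n$; this time $n$ must be at least $4$, at least twice the number of vertices of the largest \emph{even-perimeter} forbidden cycle in $\mathcal B^\delta_K$, and crucially large enough to rule out non-bipartiteness. Then I fix $\str A\in \mathcal K$ and $\str B$ as in Definition~\ref{defn:locallyfinite} and argue, exactly as before, that the local conditions force $\str B$ to behave like a member of $\mathcal E$ that almost lies in $\mathcal K$: since $\cl_{\str B}(v)$ lies in a copy of $\str A$, the function $M_{\str B}$ correctly encodes the length-$\delta$ matching and this matching is perfect; and since every substructure of $\str B$ on at most $n$ vertices completes into $\mathcal K$, every pair lies in at most one distance relation, every vertex carries precisely one unary mark, and $d_{\str B}(u,v)=\delta$ forces $v\in U_{\pi(u)}^{1-\chi(u)}$.

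Next I would establish the two genuinely new features. The first is the antipodal ``sum'' condition: if $d_{\str B}(u,v)=\delta$ and $w$ is joined by an edge to at least one of $u,v$, then both distances are defined and $d_{\str B}(u,w)+d_{\str B}(v,w)=\delta$, because there is a unique way to complete the relevant small substructure. The second, and the place where bipartiteness enters, is that $\str B$ must be bipartite in the required sense; here I would use that $n$ is at least twice the largest odd obstruction we care about, so that no short odd-perimeter cycle appears, together with condition~(7) of the suitable expansion (tying the sets $P_1,P_2$ defined via $\pi$ to the bipartition classes $Q_1,Q_2$). Concretely, the unary marks split the vertices of $\str B$ according to whether $\pi(v)\in D_1$ or $\pi(v)\in D_2$, and the local-completion hypothesis guarantees this split is a genuine bipartition compatible with the parities recorded by the valuations. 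I would also record that, as before, whenever $d_{\str B}(u,v)$ is defined, $\chi(u)(\pi(v))\neq\chi(v)(\pi(u))$ holds exactly when $d_{\str B}(u,v)\in O$ (and the equality holds exactly when $d_{\str B}(u,v)\in\delta-O$).

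Finally I would define $f\colon\binom{B}{2}\to\{0,1\}$ by $f(uv)=0$ if $\chi(u)(\pi(v))=\chi(v)(\pi(u))$ and $f(uv)=1$ otherwise, verify that $f$ satisfies the hypotheses of Fact~\ref{fact:bipcompletion} (the edge condition follows from the previous paragraph, and the antipodal-quadruple condition on two length-$\delta$ edges follows from condition~(5) of the suitable expansion, just as in the non-bipartite case), and invoke Fact~\ref{fact:bipcompletion} to fill in the remaining non-$\delta$ distances. Because that completion procedure preserves every automorphism preserving $f$, and by Observation~\ref{obs:bipvaluations} all automorphisms of $\str B$ preserve $f$, the resulting $\str B'\in\mathcal K$ is an automorphism-preserving completion, which is what local finiteness demands. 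The main obstacle I expect is the bipartiteness argument: unlike the odd non-bipartite case, the distance graph on $\str B$ may a priori contain odd cycles, so I must carefully combine the choice of $n$ (excluding short odd cycles) with condition~(7) to certify that the partition induced by the $U_i^\chi$ marks is a correct, consistent bipartition before Fact~\ref{fact:bipcompletion} can be applied; this is also where the assumption $|D_1|=|D_2|$ and the partition-preserving permutations in $\GammaL$ do their work.
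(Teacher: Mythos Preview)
Your proposal is essentially correct and follows the paper's approach, but you muddle the one genuinely new step: establishing bipartiteness of $\str B$. You propose making $n$ ``crucially large enough to rule out non-bipartiteness'' and ``at least twice the largest odd obstruction we care about,'' but there are infinitely many forbidden odd-perimeter cycles, so no finite $n$ can exclude them all via the local-completion hypothesis. The paper does \emph{not} enlarge $n$ for this purpose; $n$ is chosen exactly as in the non-bipartite case (at least $4$ and at least twice the size of the largest \emph{even}-perimeter forbidden cycle in $\mathcal B^\delta_K$).

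The correct argument, which you gesture at but do not isolate, uses only condition~(7) of the suitable expansion and is checked on \emph{pairs}. For any $u,v\in B$ with $d_{\str B}(u,v)$ defined, the substructure on $\{u,v,M_{\str B}(u),M_{\str B}(v)\}$ has a completion in $\mathcal K$, and condition~(7) there forces $d_{\str B}(u,v)$ to be even if and only if $\pi(u)$ and $\pi(v)$ lie in the same $D_i$. Hence the partition $B=\{v:\pi(v)\in D_1\}\cup\{v:\pi(v)\in D_2\}$ is a genuine bipartition of the distance graph on $\str B$, and every cycle then automatically has an even number of odd edges and thus even perimeter --- no cycle-by-cycle exclusion via $n$ is needed or possible. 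Finally, the assumption $|D_1|=|D_2|$ and the partition-preserving restriction on $\GammaL$ play no role in this proposition; they are used only later when extending partial automorphisms.
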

\begin{proof}
Let $n$ be a large enough integer (say, at least 4 and at least twice the number of vertices of the largest even-perimeter forbidden cycle in $\mathcal B^{\delta}_{K}$) and let $\str A\in \mathcal K$ and $\str B$ be as in Definition~\ref{defn:locallyfinite}. (Again, this is not the $\str A$ which we fixed at the beginning of this section.)

As for the odd non-bipartite case we get the following:
\begin{enumerate}
\item Every vertex of $\str B$ is in precisely one unary relation,
\item every pair of vertices is in at most one distance relation $R^i$ (and these relations are symmetric),
\item $M_\str{B}(u)=v \iff d_{\str B}(u,v)=\delta$,
\item the edges of length $\delta$ form a perfect matching in $\str B$,
\item if $d_{\str B}(u,v)=\delta$ then $v\in U_{\pi(u)}^{1-\chi(u)}$,
\item without loss of generality, we can assume that if $d_{\str B}(u,v)=\delta$ and $w\neq u,v$ is a vertex of $\str B$ such that at least one of $d_{\str B}(u,w)$, $d_{\str B}(v,w)$ is defined, then both distances are defined and furthermore $d_{\str B}(u,w) + d_{\str B}(v,w)=\delta$.
\item let $u,v$ be vertices such that their distance is defined. Then $\chi(u)(\pi(v)) \neq \chi(v)(\pi(u))$ implies $d_{\str A^+}(u,v)\in O$ and $\chi(u)(\pi(v)) = \chi(v)(\pi(u))$ implies $d_{\str A^+}(u,v)\in \delta-O$. 
\end{enumerate}

Furthermore, from the last condition for a suitable expansion we also get that two vertices $u,v$ of $\str B$ are at an even distance, if and only if there is $i\in \{1,2\}$ such that $\pi(u),\pi(v)\in D_i$. Note that this implies that $\str B$ contains no cycles of odd perimeter (each cycle has to contain an even number of odd edges).

Finally, from the definition of $n$ it also follows that $\str B$ contains no even-perimeter cycles forbidden in $\mathcal B^{\delta}_{K}$. Hence if we define the function $f\colon {B\choose 2}\to \{0,1\}$ as $f(uv)=0$ if $\chi(u)(\pi(v)) = \chi(v)(\pi(u))$ and $f(uv)=1$ otherwise, Fact~\ref{fact:bipcompletion} gives us an automorphism-preserving way to add the remaining distances, which is exactly what we need for a completion to $\mathcal K$.
\end{proof}
Let us again remark that $\mathcal K$ is hereditary.

\subsection{Constructing the witness}
This is completely the same as for the odd diameter non-bipartite case. We define a $\GammaL$-structure $\str A^+$ which is a suitable expansion of $\str A$, and use Theorems~\ref{thm:hl} and~\ref{thm:eppanoaxioms} with Proposition~\ref{prop:bippluscompletion} to get $\str B^+\in \mathcal K$ which is an EPPA-witness for $\str A^+$. Finally, we put $\str B$ to be the reduct of $\str B^+$ forgetting all unary marks and all functions $M$.

\subsection{Extending partial automorphisms}
Again, this is completely the same as before with the exception that the permutation $\psi$ of $D$ has to preserve the bipartition $D=D_1\cup D_2$ (it can exchange $D_1$ and $D_2$). Every partial automorphism $\varphi$ of $\str A$ respects the bipartition, and since we assumed that $|D_1|=|D_2|$, it is always possible to extend $\varphi$ to a full permutation $\psi$ as needed.

Let us remark that if one is a bit more careful, the same strategy again gives coherent EPPA.

\section{Conclusion}
We are now ready to prove Theorem~\ref{thm:main}.
\begin{proof}[Proof of Theorem~\ref{thm:main}]
In~\cite{Aranda2017}, EPPA is proved for non-bipartite classes of even diameter and bipartite classes of odd diameter. Proposition~\ref{prop:oddnonbip} proves EPPA for non-bipartite classes of odd diameter and Proposition~\ref{prop:bip} proves EPPA for bipartite classes of even diameter, hence Theorem~\ref{thm:main} is proved.
\end{proof}

We think of this paper as the first example of a more general method for bypassing the lack of an automorphism-preserving completion, namely using the method of valuation functions to add more information to the structures (and thus restrict automorphisms) while preserving all partial automorphisms of one given structure $\str A$, and then plugging this expanded class into the existing machinery. A similar trick can be done also for structures with higher arities, using higher-arity valuation functions (cf.~\cite{Hubicka2018EPPA}). However, there are still classes where this method does not work, for example the class of tournaments which poses a long-standing important problem in this area.

\section{Acknowledgements}
I would like to thank Jan Hubi\v cka, Jaroslav Ne\v set\v ril and Gregory Cherlin for valuable advice and comments which significantly improved this paper. I would also like to thank all three anonymous referees for their incredibly helpful comments. This research was supported by project 18-13685Y of the Czech Science Foundation (GA\v CR) and by Charles University project GA UK No 378119.

%\section*{References}
\bibliography{ramsey.bib}

\begin{thebibliography}{10}

\bibitem{Amato2016}
Daniela Amato, Gregory Cherlin, and Dugald Macpherson.
\newblock Metrically homogeneous graphs of diameter three.
\newblock preprint, 2016.

\bibitem{Aranda2017a}
Andres Aranda, David Bradley-Williams, Eng~Keat Hng, Jan Hubi{\v c}ka,
  Miltiadis Karamanlis, Michael Kompatscher, Mat{\v e}j Kone{\v c}n{\'y}, and
  Micheal Pawliuk.
\newblock Completing graphs to metric spaces.
\newblock {\em Electronic Notes in Discrete Mathematics}, 61:53 -- 60, 2017.
\newblock The European Conference on Combinatorics, Graph Theory and
  Applications (EUROCOMB'17).

\bibitem{Aranda2017c}
Andres Aranda, David Bradley-Williams, Eng~Keat Hng, Jan Hubi{\v c}ka,
  Miltiadis Karamanlis, Michael Kompatscher, Mat{\v e}j Kone{\v c}n{\'y}, and
  Micheal Pawliuk.
\newblock Completing graphs to metric spaces.
\newblock Submitted, arXiv:1706.00295, 2017.

\bibitem{Aranda2017}
Andres Aranda, David Bradley-Williams, Jan Hubi{\v c}ka, Miltiadis Karamanlis,
  Michael Kompatscher, Mat{\v e}j Kone{\v c}n{\'y}, and Micheal Pawliuk.
\newblock Ramsey expansions of metrically homogeneous graphs.
\newblock Submitted, arXiv:1707.02612, 2017.

\bibitem{Biggs1971}
Norman~L. Biggs and Donald~H. Smith.
\newblock On trivalent graphs.
\newblock {\em Bulletin of the London Mathematical Society}, 3(2):155--158,
  1971.

\bibitem{Cherlin2011b}
Gregory Cherlin.
\newblock Two problems on homogeneous structures, revisited.
\newblock {\em Model theoretic methods in finite combinatorics}, 558:319--415,
  2011.

\bibitem{Cherlin2013}
Gregory Cherlin.
\newblock Homogeneous ordered graphs and metrically homogeneous graphs.
\newblock In preparation, December 2017.

\bibitem{Conant2015}
Gabriel Conant.
\newblock Extending partial isometries of generalized metric spaces.
\newblock {\em Fundamenta Mathematicae}, 244:1--16, 2019.

\bibitem{eppatwographs}
David~M. Evans, Jan Hubi{\v{c}}ka, Mat{\v {e}}j Kone{\v {c}}n{\'{y}}, and
  Jaroslav Ne\v{s}et\v{r}il.
\newblock E{P}{P}{A} for two-graphs and antipodal metric spaces.
\newblock Submitted, arXiv:1812.11157, 2018.

\bibitem{Fraisse1953}
Roland Fra{\"\i}ss{\'e}.
\newblock Sur certaines relations qui g\'en\'eralisent l'ordre des nombres
  rationnels.
\newblock {\em Comptes Rendus de l'Academie des Sciences}, 237:540--542, 1953.

\bibitem{Gardiner1976}
Anthony Gardiner.
\newblock Homogeneous graphs.
\newblock {\em Journal of Combinatorial Theory, Series B}, 20(1):94--102, 1976.

\bibitem{Godsil2001}
Chris {Godsil} and Gordon {Royle}.
\newblock {\em Algebraic Graph Theory}, volume 207 of {\em Graduate Texts in
  Mathematics.}
\newblock volume 207 of Graduate Texts in Mathematics. Springer, 2001.

\bibitem{Herwig1995}
Bernhard Herwig.
\newblock Extending partial isomorphisms on finite structures.
\newblock {\em Combinatorica}, 15(3):365--371, 1995.

\bibitem{herwig1998}
Bernhard Herwig.
\newblock Extending partial isomorphisms for the small index property of many
  $\omega$-categorical structures.
\newblock {\em Israel Journal of Mathematics}, 107(1):93--123, 1998.

\bibitem{herwig2000}
Bernhard Herwig and Daniel Lascar.
\newblock Extending partial automorphisms and the profinite topology on free
  groups.
\newblock {\em Transactions of the American Mathematical Society},
  352(5):1985--2021, 2000.

\bibitem{hodges1993b}
Wilfrid Hodges, Ian Hodkinson, Daniel Lascar, and Saharon Shelah.
\newblock The small index property for $\omega$-stable $\omega$-categorical
  structures and for the random graph.
\newblock {\em Journal of the London Mathematical Society}, 2(2):204--218,
  1993.

\bibitem{hodkinson2003}
Ian Hodkinson and Martin Otto.
\newblock Finite conformal hypergraph covers and {G}aifman cliques in finite
  structures.
\newblock {\em Bulletin of Symbolic Logic}, 9(03):387--405, 2003.

\bibitem{hrushovski1992}
Ehud Hrushovski.
\newblock Extending partial isomorphisms of graphs.
\newblock {\em Combinatorica}, 12(4):411--416, 1992.

\bibitem{Hubickacycles2018}
Jan Hubi{\v c}ka, Michael Kompatscher, and Mat{\v e}j Kone{\v c}n{\'y}.
\newblock Forbidden cycles in metrically homogeneous graphs.
\newblock Submitted, arXiv:1808.05177, 2018.

\bibitem{Hubicka2018metricEPPA}
Jan Hubi{\v{c}}ka, Mat{\v{e}}j Kone{\v{c}}n{\'y}, and Jaroslav
  Ne{\v{s}}et{\v{r}}il.
\newblock A combinatorial proof of the extension property for partial
  isometries.
\newblock {\em Commentationes Mathematicae Universitatis Carolinae},
  60(1):39--47, 2019.

\bibitem{Hubicka2017sauer}
Jan Hubi{\v{c}}ka, Mat{\v {e}}j Kone{\v {c}}n{\'{y}}, and Jaroslav
  Ne\v{s}et\v{r}il.
\newblock Semigroup-valued metric spaces: {R}amsey expansions and {E}{P}{P}{A}.
\newblock In preparation, 2018.

\bibitem{Hubicka2018EPPA}
Jan Hubi{\v{c}}ka, Mat\v{e}j Kone\v{c}n\'{y}, and Jaroslav Ne\v{s}et\v{r}il.
\newblock All those {E}{P}{P}{A} classes (strengthenings of the
  {H}erwig-{L}ascar theorem).
\newblock arXiv:1902.03855, 2019.

\bibitem{Hubicka2016}
Jan Hubi{\v{c}}ka and Jaroslav Ne\v{s}et\v{r}il.
\newblock All those {R}amsey classes ({R}amsey classes with closures and
  forbidden homomorphisms).
\newblock Submitted, arXiv:1606.07979, 58 pages, 2016.

\bibitem{Konecny2018bc}
Mat{\v e}j Kone{\v c}n{\'y}.
\newblock Combinatorial properties of metrically homogeneous graphs.
\newblock Bachelor's thesis, Charles University, 2018.
\newblock arXiv:1805.07425.

\bibitem{Konecny2018b}
Mat{\v e}j Kone{\v c}n{\'y}.
\newblock Semigroup-valued metric spaces.
\newblock Master's thesis, Charles University, 2019.
\newblock arXiv:1810.08963.

\bibitem{Lachlan1980}
Alistair~H. Lachlan and Robert~E. Woodrow.
\newblock Countable ultrahomogeneous undirected graphs.
\newblock {\em Transactions of the American Mathematical Society}, pages
  51--94, 1980.

\bibitem{mackey1966}
George~W. Mackey.
\newblock Ergodic theory and virtual groups.
\newblock {\em Mathematische Annalen}, 166(3):187--207, 1966.

\bibitem{Macpherson2011}
Dugald Macpherson.
\newblock A survey of homogeneous structures.
\newblock {\em Discrete Mathematics}, 311(15):1599 -- 1634, 2011.
\newblock Infinite Graphs: Introductions, Connections, Surveys.

\bibitem{Nevsetvril2007}
Jaroslav Ne{\v{s}}et{\v{r}}il.
\newblock Metric spaces are {R}amsey.
\newblock {\em European Journal of Combinatorics}, 28(1):457--468, 2007.

\bibitem{otto2017}
Martin Otto.
\newblock Amalgamation and symmetry: From local to global consistency in the
  finite.
\newblock {\em arXiv:1709.00031}, 2017.

\bibitem{sabok2017automatic}
Marcin Sabok.
\newblock Automatic continuity for isometry groups.
\newblock {\em Journal of the Institute of Mathematics of Jussieu}, pages
  1--30, 2017.

\bibitem{Siniora}
Daoud Siniora and S{\l}awomir Solecki.
\newblock Coherent extension of partial automorphisms, free amalgamation, and
  automorphism groups.
\newblock {\em The Journal of Symbolic Logic}, 2019.

\bibitem{solecki2005}
S{\l}awomir Solecki.
\newblock Extending partial isometries.
\newblock {\em Israel Journal of Mathematics}, 150(1):315--331, 2005.

\bibitem{vershik2008}
Anatoly~M. Vershik.
\newblock Globalization of the partial isometries of metric spaces and local
  approximation of the group of isometries of {U}rysohn space.
\newblock {\em Topology and its Applications}, 155(14):1618--1626, 2008.

\end{thebibliography}
\end{document}